\definecolor{ddarkbrown}{rgb}{0.5,0.2,0.05} \definecolor{bbluegray}{rgb}{0.05,0,0.5}
\newtheorem{theorem}{Theorem}[section]
\newtheorem{proposition}[theorem]{Proposition}
\newtheorem{lemma}[theorem]{Lemma}
\newcommand{\BEAS}{\begin{eqnarray*}}
\newcommand{\EEAS}{\end{eqnarray*}}
\newcommand{\BEA}{\begin{eqnarray}}
\newcommand{\EEA}{\end{eqnarray}}
\newcommand{\BEQ}{\begin{equation}}
\newcommand{\EEQ}{\end{equation}}
\newcommand{\BIT}{\begin{itemize}}
\newcommand{\EIT}{\end{itemize}}
\newcommand{\BNUM}{\begin{enumerate}}
\newcommand{\ENUM}{\end{enumerate}}
\newcommand{\BA}{\begin{array}}
\newcommand{\EA}{\end{array}}
\newcommand{\refp}[1]{(\ref{#1})}
\newcommand{\ones}{\mathbf 1}
\newcommand{\reals}{{\mbox{\bf R}}}
\newcommand{\sreals}{\scriptsize{\mbox{\bf R}}}
\newcommand{\symm}{{\mbox{\bf S}}}  
\newcommand{\Rank}{\mathop{\bf Rank}}
\newcommand{\NumRank}{\mathop{\bf NumRank}}
\newcommand{\NumCard}{\mathop{\bf NumCard}}
\newcommand{\Card}{\mathop{\bf Card}}
\newcommand{\Tr}{\mathop{\bf Tr}}
\newcommand{\diag}{\mathop{\bf diag}}
\newcommand{\lambdamax}{{\lambda_{\rm max}}}
\newcommand{\idm}{\mathbf{I}}
\newcommand{\Expect}{\textstyle\mathop{\bf E{}}}
\newcommand{\Prob}{\mathop{\bf Prob}}
\newcommand{\var}{\mathop{\bf var}}
\newcommand{\redtext}[1]{\textcolor{red}{#1}}
\begin{document}
\title{Weak Recovery Conditions from Graph Partitioning Bounds\\ and Order Statistics}
\author{Alexandre d'Aspremont}
\address{CMAP, Ecole Polytechnique, UMR CNRS 7641.}
\email{alexandre.daspremont@m4x.org}
\author{Noureddine El Karoui}
\address{Statistics, U.C. Berkeley. Berkeley, CA 94720.}
\email{nkaroui@stat.berkeley.edu}

\keywords{Compressed Sensing, MaxCut, $k$-Dense-Subgraph, Correlation Clustering, Semidefinite Programming.}
\date{\today}
\subjclass[2010]{94A12, 90C27, 90C22}

\begin{abstract}
We study a weaker formulation of the nullspace property which guarantees recovery of sparse signals from linear measurements by $\ell_1$ minimization. We require this condition to hold only with high probability, given a distribution on the nullspace of the coding matrix $A$. Under some assumptions on the distribution of the reconstruction error, we show that testing these weak conditions means bounding the optimal value of two classical graph partitioning problems: the $k$-Dense-Subgraph and MaxCut problems. Both problems admit efficient, relatively tight relaxations and we use a randomization argument to produce new approximation bounds for $k$-Dense-Subgraph. We test the performance of our results on several families of coding matrices.
\end{abstract}
\maketitle

\section{Introduction}
Given a coding matrix $A\in\reals^{q\times n}$ and a signal $e\in\reals^n$, we focus on conditions under which the solution $x_0$ to the following minimum cardinality problem
\BEQ\label{eq:min-card}
\BA{rll}
x_0= & \text{min.} & \Card(x)\\
& \mbox{subject to} & Ax=Ae,\\
\EA
\tag{$\ell_0$-recov.}
\EEQ
which is a combinatorial problem in $x\in\reals^n$, can be recovered by solving
\BEQ\label{eq:min-ell1}
\BA{rll}
x^{\text{lp}} = &\mbox{min.} & \|x\|_1\\
& \mbox{subject to} & Ax=Ae,\\
\EA
\tag{$\ell_1$-recov.}
\EEQ
which is a convex program in $x\in\reals^n$. Problem~(\ref{eq:min-card}) arises in various fields ranging from signal processing to statistics. Suppose for example that we make a few linear measurements of a high dimensional signal, which admits a sparse representation in a well chosen basis (e.g. Fourier, wavelet). Under certain conditions, solving~(\ref{eq:min-ell1}) will allow us to reconstruct the signal exactly \citep{Dono04,Dono05,Dono06a}. In a coding application, suppose we transmit a message which is corrupted by a few errors, solving~(\ref{eq:min-ell1}) will then allow us to reconstruct the message exactly \citep{Cand05,Cand06}. Finally, problem (\ref{eq:min-ell1}) is directly connected to variable selection and penalized regression problems (e.g. LASSO \cite{Tibs96}) arising in statistics \citep{Zhao06,Mein07,Mein07a,Cand07,Bick07,Cand09a}. Of course, in all these fields, problems~(\ref{eq:min-card}) and~(\ref{eq:min-ell1}) are overly simplified. In practice for example, the observations could be noisy, approximate solutions might be sufficient and we might have strict computational limits on the decoding side. While important, these extensions are outside the scope of this work.

Based on results by \cite{Vers92} and \cite{Affe92}, \cite{Dono05} showed that when the solution $x_0$ of (\ref{eq:min-card}) is sparse with $\Card(x_0)=k$ and the coefficients of $A$ are i.i.d. Gaussian, then w.h.p. the solution of the (convex) problem in (\ref{eq:min-ell1}) will always match that of the combinatorial problem in~(\ref{eq:min-card}) provided $k$ is below an explicitly computable {\em strong recovery} threshold $k_S$. They also show that if $k$ is below another (larger) {\em weak recovery} threshold~$k_W$, then these solutions match with an exponentially small probability of failure.

Generic conditions for strong recovery based on sparse extremal eigenvalues, or {\em restricted isometry properties} (RIP), were also derived in \cite{Cand05} and \cite{Cand06}, who proved that certain random matrix classes satisfied these conditions near optimal values of $k$ with an exponentially small probability of failure. Simpler, weaker conditions which can be traced back to \cite{Dono01}, \cite{Zhan05} or \cite{Cohe06} for example, are based on properties of the nullspace of $A$. When the signal cardinality $\Card(e)\leq k$ and the {\em Nullspace Property} (NSP) holds, i.e. when there is a constant $\alpha_k<1/2$ such that
\BEQ\label{eq:ineq-a}
\|x\|_{k,1} \leq \alpha_k \|x\|_1 \tag{det-NSP}
\EEQ
for all vectors $x\in\reals^n$ with $Ax=0$, then solving the convex problem~(\ref{eq:min-ell1}) will recover the global solution to the combinatorial problem~(\ref{eq:min-card}). Condition~\eqref{eq:ineq-a} can be understood as an incoherence measure, i.e. it means that not all of the mass in $x$ can be concentrated among only $k$ coefficients, in other words:
\begin{center}
{\em Good coding matrices have incoherent nullspace vectors.}
\end{center}
In particular, this condition means that the nullspace of $A$ cannot contain sparse vectors. Furthermore, the constant $\alpha_k$ can be used to explicitly bound the reconstruction error when solving the $\ell_1$-recovery problem in~(\ref{eq:min-ell1}). This is illustrated in Proposition~\ref{prop:recon-error} below, directly adapted from~\citet[Th. 4.3]{Cohe06}.

One fundamental issue with the sparse recovery conditions described above is that, except for explicit thresholds available for certain types of random matrices (with high probability), testing these conditions on generic (deterministic) matrices is potentially {\em harder} than solving the combinatorial $\ell_0$-norm minimization problem in~\eqref{eq:min-card} for example as it implies either solving a combinatorial problem to compute $\alpha_k$ in~\eqref{eq:ineq-a}, or computing sparse eigenvalues. Recent results in \cite{Cand09a} show that the traditional (and tractable) incoherence conditions ensure recovery of sparse signals with high probability, given a uniform distribution on the signal. These incoherence conditions lack universality however, in the sense that contrary to the combinatorial conditions mentioned above, they cannot be used to guarantee recovery of {\em all} signals of near-optimal size $k$. Convex relaxation relaxation bounds were used in \citet{dAsp08b} (on sparse eigenvalues), \citet{Judi08} or \citet{dAsp08a} (on NSP) to test sparse recovery conditions similar to~\eqref{eq:ineq-a} on arbitrary matrices. Unfortunately, the performance (tightness) of these relaxations is still very insufficient: for matrices satisfying the sparse recovery conditions in \cite{Cand05} up to signal cardinality $k^*$, these three relaxations can only certify that the conditions hold up to cardinality $\sqrt{k^*}$ and are also likely to provide poor bounds on reconstruction error.

In what follows, we seek to enforce a weaker version of condition~(\ref{eq:ineq-a}). We will bound the incoherence measure $\alpha_k$ in~\eqref{eq:ineq-a} with high probability over a random sample of vectors in the nullspace of $A$. Another way to look at this approach is to remember that, if $x^\mathrm{lp}$ solves the $\ell_1$-decoding problem in~(\ref{eq:min-ell1}), the vector $x^\mathrm{lp}-e$ is always in the nullspace of $A$ and Proposition~\ref{prop:recon-error} below shows that enforcing condition~(\ref{eq:ineq-a}) on the reconstruction error $x^\mathrm{lp}-e$ allows us to bound the magnitude of this error.

Here, because we cannot efficiently test condition~\eqref{eq:ineq-a} over all vectors in the nullspace of $A$, we will instead require condition~(\ref{eq:ineq-a}) to hold only with high probability on the nullspace of $A$, given a distribution on this subspace. Let us assume for simplicity that $\Rank(A)=q$, and let $F\in\reals^{n\times m}$ with $m=n-q$ be a basis for the nullspace of $A$ (not necessarily orthogonal or normalized). We will require that the NSP condition (\ref{eq:ineq-a}) discussed above, which reads
\BEQ\label{eq:ineq-rnd}
\|Fy\|_{k,1} \leq \alpha_k \|Fy\|_1 \tag{proba-NSP}
\EEQ
be satisfied with high probability, given a distribution on $y$. We will start by assuming that $y$ is Gaussian. In this case, we will see that both sides of condition~(\ref{eq:ineq-rnd}) can be explicitly controlled by the solution of classic graph partitioning problems. These combinatorial problems admit tight, efficiently computable approximations which will allow us to bound the probability that~\eqref{eq:ineq-rnd} holds. We will then extend these results to more general distributions on the nullspace and show that the same quantities which controlled concentration in the Gaussian case, also control fluctuations in the more general model.

Of course, assuming the true distribution on the signal $e$ is either sparse or follows a power law, our simple model on the nullspace of $A$ error could have zero measure with respect to the true (structured) distribution of $x^\mathrm{lp}-e$, especially since $x^\mathrm{lp}$ is dependent on $A$. In fact, at first sight, we are implicitly positing a model on the reconstruction error, then ultimately use the model to bound this same reconstruction error, an apparent circular reference. Our main objective however is not to directly bound the error but rather to isolate efficiently computable quantities which will be good proxies for this error, sacrificing some statistical accuracy in favor of computational efficiency. Moreover, our main result is to efficiently approximate the Lipschitz constants of the two norms in~\eqref{eq:ineq-rnd}, constants which are likely to play a critical role {\em whatever the model} on the reconstruction error. Overall, the phase transition for signal recovery is usually very sharp (or ``bang-bang'') meaning that either all signals are recovered perfectly or none of them are. This means that choosing a realistic statistical model is probably not that crucial.

Current results in compressed sensing provide universal recovery guarantees using intractable conditions (which can only be tested with high probability on {\em random matrices}). Our objective here is to do the opposite and isolate {\em tractable} measures of performance that can be computed on {\em arbitrary matrices}, even if this means losing some confidence in our signal recovery guarantees. Numerical experiments detailed at the end of this work, using simple models for~$e$, seem to suggest that our assumptions on $x^\mathrm{lp}-e$ are not completely unreasonable (cf. Figure~\ref{fig:projerr}). Furthermore, the fact that the true signal $e$ is inherently structured means that, in principle, these statistical fidelity questions would arise with {\em any} model on $e$.

Our contribution here is twofold. First, assuming a Gaussian model or bounded independent model on the nullspace of the matrix $A$ in~(\ref{eq:min-card}), we show that testing if the NSP condition~(\ref{eq:ineq-rnd}) holds with high probability amounts to bounding the value of two classic graph partitioning problems: MaxCut and $k$-Dense-Subgraph. Second, we show new approximation results for semidefinite relaxations of the $k$-Dense-Subgraph problem when the graph weight matrix is positive semidefinite but has coefficients of {\em arbitrary sign}. This result has applications outside of the compressed sensing context discussed in this paper, and is directly related to {\em correlation clustering} for example. Solving a $k$-Dense-Subgraph problem on a (positive semidefinite) correlation matrix (modeling similarities between variables) isolates a highly correlated $k$-cluster of variables. Here, we use these approximation results to show that our weak recovery conditions can be certified in polynomial time for arbitrary matrices even when the target cardinality $k$ is near the true recovery threshold $k^*$ (i.e. a log term away). This allows us to break the $\sqrt{k^*}$ barrier that plagued all tractable conditions for recovery developed so far \citep{dAsp08b,Judi08,dAsp08a}. The current state of the art in checking recovery conditions is that we have conditions that we can trust but cannot fully test. We focus here on conditions we can test, but cannot fully trust.

The paper is organized as follows. Conditions for sparse recovery with high probability, given a model on the nullspace of the sampling matrix are derived in Section~\ref{s:weak}. The performance of these conditions on random matrices and links with the restricted isometry property are discussed in~Section~\ref{s:rip}. Section~\ref{s:bounds} derives semidefinite relaxations and approximation results for the graph partitioning problems used in testing these weak recovery conditions. Section~\ref{s:algos} brielfy discusses the complexity of solving these relaxations. Section~\ref{s:tightness} shows that the relaxations detailed in Section~\ref{s:bounds} allow us to certify weak recovery for near optimal values of the signal cardinality~$k$, thus breaking the $\sqrt{n}$ barrier that plagues tractable (uniform, deterministic) recovery conditions. Finally, we present some numerical experiments in Section~\ref{s:numres}.

\subsection*{Notation}
For $x\in\reals^{n}$, we write $\|x\|_{k,1}$ the sum of the magnitudes of the $k$ largest coefficients of $x$. When $X\in\reals^{m\times n}$, $X_i$ is the $i^{th}$ row of $X$, $\|X\|_2$ the spectral norm and $\|X\|_F$ the Frobenius (Euclidean) norm of $X$. For matrices $A,B\in\reals^{m\times n}$, we write $A \otimes B$ their Kronecker product and $A \circ B$ their Schur (componentwise) product. We write $\NumRank(X)$ the numerical rank of the matrix $X$, with $\NumRank(X)=\|X\|_F^2/\|X\|_2^2$, and $\NumCard(x)$ is the numerical cardinality of a vector $x$, with $\NumCard(x)=\|x\|_1^2/\|x\|_2^2$. Finally, we write $x \preceq_c y$ when $\Expect[f(x)] \leq \Expect[f(y)]$ for any {\em convex} function $f:\reals^n \rightarrow \reals$ (this is usually called convex majorization).

\section{Weak recovery conditions}\label{s:weak}
To highlight the central role of the NSP condition in $\ell_1$ decoding, we begin by adapting a result from \citet[Th. 4.3]{Cohe06} which uses the constant $\alpha_k$ to bound the reconstruction error when decoding the observations $Ae$ by solving problem~\eqref{eq:min-ell1}. Recall that $x^\mathrm{lp}$ is the solution to the linear program in~(\ref{eq:min-ell1}) and~$e$ the true signal.

\begin{proposition}\label{prop:recon-error}
Suppose that $\|x^\mathrm{lp}-e\|_{k,1}\leq \alpha_k \|x^\mathrm{lp}-e\|_1$ for some $\alpha_k<1/2$, where $e\in\reals^n$ and $x^\mathrm{lp}\in\reals^n$ solves problem~(\ref{eq:min-ell1}), then $A(x^\mathrm{lp}-e)=0$ and
\BEQ\label{eq:error-bnd}
\|x^\mathrm{lp}-e\|_1 \leq \frac{2}{(1-2\alpha_k)} ~ \textstyle\min_{\left\{y\in\sreals^n:\,\Card(y)\leq k\right\}} \|y-e\|_1,
\EEQ
where the right-hand side is proportional to the best $\ell_1$ reconstruction error on $e$ using a signal with cardinality $k$.
\end{proposition}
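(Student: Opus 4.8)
The plan is to follow the classical nullspace argument of \citet{Cohe06}, adapted to the order-statistic norm $\|\cdot\|_{k,1}$. The first claim is immediate: since $x^\mathrm{lp}$ is feasible for~\eqref{eq:min-ell1}, it satisfies $Ax^\mathrm{lp}=Ae$, so that $h := x^\mathrm{lp}-e$ lies in the nullspace, $Ah=0$. For the error bound, I would first identify the right-hand side: letting $T\subseteq\{1,\dots,n\}$ index the $k$ largest-magnitude coefficients of $e$, the best $k$-sparse $\ell_1$ approximation of $e$ is the restriction $e_T$, so that $\min_{\{y:\,\Card(y)\le k\}}\|y-e\|_1 = \|e_{T^c}\|_1 =: \sigma_k$. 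The goal then reduces to showing $\|h\|_1 \le \frac{2}{1-2\alpha_k}\,\sigma_k$.

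Next I would exploit the optimality of $x^\mathrm{lp}$. Since $e$ is itself feasible for~\eqref{eq:min-ell1}, minimality gives $\|x^\mathrm{lp}\|_1 = \|e+h\|_1 \le \|e\|_1$. Splitting both sides over $T$ and $T^c$ and applying the triangle inequality in the form $\|e_T+h_T\|_1 \ge \|e_T\|_1-\|h_T\|_1$ and $\|e_{T^c}+h_{T^c}\|_1 \ge \|h_{T^c}\|_1-\|e_{T^c}\|_1$, together with $\|e\|_1=\|e_T\|_1+\|e_{T^c}\|_1$, yields the cone inequality $\|h_{T^c}\|_1 \le \|h_T\|_1 + 2\sigma_k$. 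This is purely a consequence of $x^\mathrm{lp}$ being an $\ell_1$-minimizer and uses no hypothesis on $h$ yet.

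The hypothesis enters through the key link between the support restriction and the order-statistic norm: because $|T|\le k$, the sum of magnitudes of $h$ on $T$ is dominated by the sum of its $k$ largest coefficients, i.e. $\|h_T\|_1 \le \|h\|_{k,1}$, and by assumption $\|h\|_{k,1}\le\alpha_k\|h\|_1$. Combining with the cone inequality and $\|h\|_1 = \|h_T\|_1 + \|h_{T^c}\|_1$ gives $\|h\|_1 \le 2\|h_T\|_1 + 2\sigma_k \le 2\alpha_k\|h\|_1 + 2\sigma_k$; rearranging and using $\alpha_k<1/2$ (so that $1-2\alpha_k>0$) produces the claimed bound.

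The argument is essentially routine once set up correctly, and I expect the only real subtlety to be the third step: recognizing that the order-statistic functional $\|\cdot\|_{k,1}$ is exactly the right quantity to control the restriction $\|h_T\|_1$ for the worst-case top-$k$ support $T$ of $e$, so that the feasibility-driven cone inequality can be closed using the hypothesis. Keeping track of which inequalities come from optimality versus from the NSP bound, and ensuring the factor of $2$ and the sign of $1-2\alpha_k$ are handled consistently, is where care is needed; there is no serious analytic obstacle.
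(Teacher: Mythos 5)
Your proposal is correct and follows essentially the same argument as the paper: the same feasibility bound $\|x^\mathrm{lp}\|_1\le\|e\|_1$, the same split over $T$ and $T^c$ yielding the cone inequality, and the same use of $\|h_T\|_1\le\|h\|_{k,1}\le\alpha_k\|h\|_1$. Your final algebra (substituting directly into $\|h\|_1=\|h_T\|_1+\|h_{T^c}\|_1$ to get $\|h\|_1\le 2\alpha_k\|h\|_1+2\sigma_k$) is a slightly more streamlined route to the constant $2/(1-2\alpha_k)$ than the paper's two-step bounding of $\|h_T\|_1$ and $\|h_{T^c}\|_1$ separately, but this is a cosmetic difference, not a different proof.
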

\begin{proof}
We adapt the proof of \citet[Th. 4.3]{Cohe06}. Because $x^\mathrm{lp}$ solves (\ref{eq:min-ell1}), we have $\|x^\mathrm{lp}\|_1\leq \|e\|_1$ since $e$ is feasible. Denoting by $T$ the indices of the $k$ largest coefficients in absolute value of $e$ and by $\eta=x^\mathrm{lp}-e$ the reconstruction error, we write
\[
\|x^\mathrm{lp}_T\|_1+\|x^\mathrm{lp}_{T^c}\|_1 \leq \|e_T\|_1+\|e_{T^c}\|_1
\]
and triangular inequalities yield
\[
\|e_T\|_1 - \|\eta_T\|_1 + \|\eta_{T^c}\|_1 - \|e_{T^c}\|_1 \leq \|e_T\|_1+\|e_{T^c}\|_1
\]
hence
\[
\|\eta_{T^c}\|_1 \leq \|\eta_T\|_1 + 2 \|e_{T^c}\|_1.
\]
Note that by definition of $T$, we have $\|e_{T^c}\|_1=\min_{\left\{y\in\sreals^n:\,\Card(y)\leq k\right\}} \|y-e\|_1$. From our assumption on $\eta$ and by definition of $\|\cdot\|_{k,1}$, $|T|=k$ means
\[
\|\eta_T\|_1\leq \|\eta\|_{k,1} \leq \alpha_k \|\eta\|_1=\alpha_k (\|\eta_T\|_1+\|\eta_{T^c}\|_1)
\]
hence
\[
\|\eta_T\|_1 \leq \frac{\alpha_k}{1-\alpha_k} \|\eta_{T^c}\|_1
\]
which then yields
\[
\|\eta_{T^c}\|_1 \leq \frac{(2-2\alpha_k)}{(1-2\alpha_k)} ~\textstyle \min_{\left\{y\in\sreals^n:\,\Card(y)\leq k\right\}} \|y-e\|_1.
\]
Using the fact that
\[
\|\eta\|_1=\|\eta_T\|_1+\|\eta_{T^c}\|_1\leq \left(1+\frac{\alpha_k}{1-\alpha_k}\right)\|\eta_{T^c}\|_1
\]
we get $\|\eta\|_1 \leq \|\eta_{T^c}\|_1 /(1-\alpha_k)$, which produces the desired result.
\end{proof}

This last result shows that whenever the reconstruction error satisfies~\eqref{eq:ineq-a} with constant $\alpha_k<1$, then the magnitude of this error is at most $2/(1-2 \alpha_k)$ times the best possible reconstruction error achievable using a signal of size $k$.

\subsection{Invariance properties}\label{ss:invar}
Remark that condition~\eqref{eq:ineq-a}, which guarantees recovery of all signals of cardinality less than $k$ can be written
\BEQ\label{eq:ker-nsp}
\|Fy\|_{k,1} \leq \alpha_k \|Fy\|_1, \quad \mbox{for all } y\in\reals^m,
\EEQ
for some $\alpha_k<1/2$, where $F\in\reals^{n \times m}$ is a basis of the nullspace of $A$. This condition is clearly independent of the choice of basis, hence if $F$ satisfies~\eqref{eq:ker-nsp} then so does $FQ$ where $Q$ is any orthogonal matrix.

\subsection{Gaussian model}\label{s:weak-cond-gauss}
In what follows, we will use concentration inequalities to bound both sides of the probabilistic Nullspace Property inequality (\ref{eq:ineq-rnd}), namely check that
\begin{equation*}
\|Fy\|_{k,1} \leq \alpha_k \|Fy\|_1
\end{equation*}
holds with high probability when $y$ is Gaussian with $y\sim{\mathcal N}(0,\idm_{m})$, where $F$ is a basis of the nullspace of $A$. Of course, this means that we implicitly assume that the reconstruction error $x^\mathrm{lp}-e$ follows a Gaussian model. Outside of tractability benefits, there is no fundamental reason to pick a Gaussian distribution on the nullspace of $A$ here, except that its rotational invariance means the basis matrix $F$ only has to be defined up to a rotation. This is consistent with the fact that recovery performance, as characterized by the nullspace property~\eqref{eq:ineq-a}, is only a function of the nullspace of $A$ and not of its matrix representation. Concentration inequalities on Lipschitz functions of Gaussian variables then translate~(\ref{eq:ineq-rnd}) into explicit conditions on the matrix $F$. We begin by the following lemma controlling the left-hand side of this inequality.
\begin{lemma}\label{lem:cond-left}
Suppose $F\in\reals^{n\times m}$ and $y\sim{\mathcal N}(0,\idm_{m})$, then
\[
\Prob\left[\|Fy\|_{k,1}\geq \Expect[\|Fy\|_{k,1}] +x \right]\leq e^{-\frac{x^2}{2\sigma_k^2(F)}}
\]
where
\BEQ\label{eq:sigma}
\sigma_k^2(F) \triangleq \max_{\{u\in\{0,1\}^{2n},\ones^Tu\leq k\}}
u^T
{\scriptsize \left(\BA{rr}
1 & -1\\
-1 & 1\\
\EA\right)}
\otimes FF^T
u.
\EEQ
and
\[
\Expect[\|Fy\|_{k,1}] \leq \sigma_k(F) \sqrt{2 \log (2^k \binom{n}{k})} \leq \sigma_k(F) \sqrt{2k\left(1+\log\left(\frac{2n}{k}\right)\right)}.
\]
\end{lemma}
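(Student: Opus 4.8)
The plan is to realize $\|Fy\|_{k,1}$ as the supremum of a finite family of linear forms in $y$, which simultaneously exposes its Lipschitz constant (for the tail bound) and its structure as a maximum of Gaussians (for the expectation bound). The starting point is the variational identity
\[
\|z\|_{k,1} = \max_{w \in W_k} w^T z, \qquad W_k = \{w \in \{-1,0,1\}^n : \Card(w) = k\},
\]
where $|W_k| = 2^k \binom{n}{k}$ (choose the $k$ active coordinates, then a sign on each) and the maximizer places $w_i = \mathrm{sign}(z_i)$ on the $k$ largest-magnitude entries of $z$. Applying this with $z = Fy$ gives $\|Fy\|_{k,1} = \max_{w \in W_k} (F^T w)^T y$.

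For the concentration inequality, I would invoke the Gaussian concentration inequality for Lipschitz functions: if $g$ is $L$-Lipschitz for the Euclidean norm and $y \sim {\mathcal N}(0,\idm_m)$, then $\Prob[g(y) \geq \Expect[g(y)] + x] \leq e^{-x^2/(2L^2)}$. Here $g(y) = \|Fy\|_{k,1}$ is a maximum of the linear maps $y \mapsto (F^T w)^T y$, so for any $y,y'$ one has $|g(y) - g(y')| \leq \max_{w \in W_k} |(F^T w)^T(y-y')| \leq \big(\max_{w\in W_k} \|F^T w\|_2\big)\|y-y'\|_2$, whence $L = \max_{w \in W_k}\sqrt{w^T FF^T w}$. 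It then remains to check $L^2 = \sigma_k^2(F)$, which is where I would unfold the Kronecker product in~\eqref{eq:sigma}: writing $u = (u_+,u_-) \in \{0,1\}^n \times \{0,1\}^n$, the block matrix $\bigl(\begin{smallmatrix}1 & -1\\ -1 & 1\end{smallmatrix}\bigr)\otimes FF^T$ gives objective $(u_+ - u_-)^T FF^T (u_+ - u_-)$, so with $w := u_+ - u_- \in \{-1,0,1\}^n$ the maximum in~\eqref{eq:sigma} becomes $\max\{w^T FF^T w : w \in \{-1,0,1\}^n\}$ under a support budget. For fixed $w$ the budget $\ones^T u$ is minimized by taking $u_+,u_-$ disjoint, costing exactly $\Card(w)$, so $\ones^T u \le k$ is equivalent to $\Card(w)\le k$; and since $FF^T \succeq 0$ one can fill an under-used support up to exactly $k$ without decreasing the (nonnegative) form — appending a coordinate $j$ changes it by $(FF^T)_{jj} + 2\epsilon_j (FF^T w)_j \ge 0$ for a suitable sign $\epsilon_j$ — so the budget reduces to $\Card(w)=k$, matching $W_k$. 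This identifies $\sigma_k^2(F)$ with $L^2$.

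For the expectation I would use the standard maximal inequality for Gaussians: each $(F^T w)^T y$ is centered Gaussian with variance $w^T FF^T w \le \sigma_k^2(F)$, and the maximum over the finite set $W_k$ of $N = 2^k \binom{n}{k}$ such variables satisfies $\Expect[\max_{w\in W_k}(F^T w)^T y] \le \sigma_k(F)\sqrt{2\log N}$, giving the first inequality. The second is then pure counting: using $\binom nk \le (en/k)^k$ yields $\log\big(2^k\binom nk\big) \le k\log(2en/k) = k\big(1+\log(2n/k)\big)$, and taking square roots produces the claimed bound. I expect the only genuinely delicate point to be the identification $L^2 = \sigma_k^2(F)$, specifically the argument that the seemingly larger feasible set $\{u\in\{0,1\}^{2n}:\ones^T u \le k\}$ in~\eqref{eq:sigma} does not enlarge the optimum beyond the $\{-1,0,1\}^n$, support-$k$ maximum governing the Lipschitz constant; everything else is a routine application of Gaussian concentration and a maximal inequality.
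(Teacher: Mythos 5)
Your proof is correct and follows essentially the same route as the paper: represent $\|Fy\|_{k,1}$ as a maximum of finitely many centered Gaussians, apply Gaussian concentration for Lipschitz functions (Borell--TIS) with the maximal variance as the squared Lipschitz constant, and bound the expectation by the standard $\sigma\sqrt{2\log N}$ maximal inequality plus the $\binom{n}{k}\le(en/k)^k$ count. The only difference is cosmetic --- you work directly with $w\in\{-1,0,1\}^n$ rather than the $(u_+,u_-)$ splitting, and you spell out (via the sign-choice/PSD argument) the identification of the budget-constrained maximum in~\eqref{eq:sigma} with the exact-support-$k$ maximum, a detail the paper leaves implicit.
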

\begin{proof} We can write the left-hand side of inequality (\ref{eq:ineq-rnd}) as
\[
\|Fy\|_{k,1} = \max_{\{u=(u_+,u_-)\in\{0,1\}^{2n},\ones^Tu\leq k\}} (u_+-u_-)^TFy
\]
which means that $\|Fy\|_{k,1}$ is the maximum of Gaussian variables. Concentration results detailed in \cite[Th. 3.12]{Mass07} for example show that
\[
\Prob\left[\|Fy\|_{k,1}\geq \Expect[\|Fy\|_{k,1}] +x \right]\leq e^{-\frac{x^2}{2\sigma_k^2(F)}}
\]
where $\sigma_k(F)$ is defined as
\[
\sigma_k^2(F)=\max_{\{u=(u_+,u_-)\in\{0,1\}^{2n},\ones^Tu\leq k\}}\Expect\left[\left((u_+-u_-)^TFy\right)^2\right].
\]
We have
\BEAS
\Expect\left[\left((u_+-u_-)^TFy\right)^2\right] &=& \left\|(u_+-u_-)^TF\right\|^2_2 \\
&=& (u_+-u_-)^TFF^T(u_+-u_-)\\
&=&
\left(\BA{c}
u_+\\u_-\EA\right)^T
\left(\BA{rr}
FF^T & -FF^T\\
-FF^T & FF^T\\
\EA\right)
\left(\BA{c}
u_+\\u_-\EA\right),
\EEAS
and we recover (\ref{eq:sigma}) after setting $u=(u_+,u_-)$. Note that we also have
\[
\|Fy\|_{k,1} = \max_{\{v \in {{\mathcal V}_k}\}} v^T Fy,
\]
where ${\mathcal V}_k$ is the set of vectors of size $n$ with exactly $k$ entries equal to $+1$ or $-1$, and $n-k$ zeroes. Each $v^TFy$ is Gaussian with zero mean and variance $v^TFF^Tv$, so $\|Fy\|_{k,1}$ is the maximum of $2^k \binom{n}{k}$ Gaussian random variables. Using \cite[Lem. 2.3]{Mass07} we can therefore bound the expectation as follows
\[
\Expect[\|Fy\|_{k,1}] \leq \sigma_k(F) \sqrt{2 \log (2^k \binom{n}{k})}
\]
and
\[
\binom{n}{k}\leq \frac{n^k}{k!} \leq \left(\frac{ne}{k}\right)^k
\]
yields the desired result.
\end{proof}

Note that the bound in $\exp(-{x^2}/{2\sigma_k^2(F)})$ can be replaced by $2(1-N(x/\sigma_k(F)))$ (see e.g \cite[Thm 3.8]{Mass07}, where $N(x)$ is the Gaussian CDF, which is smaller for larger values of $x$. Expression (\ref{eq:sigma}) means $\sigma_k^2(F)$ is the optimum value of a $k$-Dense-Subgraph problem. Several efficient approximation algorithms have been derived for this graph partitioning problem and will be discussed in Section~\ref{s:bounds}. We now apply similar concentration results to control the fluctuations of the right hand side of inequality (\ref{eq:ineq-rnd}).

\begin{lemma}\label{lem:cond-right}
Suppose $F\in\reals^{n\times m}$ and $y\sim{\mathcal N}(0,\idm_{m})$, then
\[
\Prob\left[\|Fy\|_1 \leq \Expect[\|Fy\|_1] - x \right]\leq e^{-\frac{x^2}{2L^2(F)}}
\]
where
\[
\Expect[\|Fy\|_1]=\sqrt{\frac{2}{\pi}}\sum_{i=1}^n \|F_i\|_2
\]
and $L^2(F)=\max_{v\in\{-1,1\}^n}v^TFF^Tv ~ (=\sigma^2_n(F))$ is bounded by the following MaxCut relaxation
\BEQ\label{eq:L}
\BA{rll}
\frac{2}{\pi} L^2_\mathrm{mxct}(F) \leq L^2(F) \leq L^2_\mathrm{mxct}(F) \triangleq & \mbox{max.} & \Tr(XFF^T)\\
&\mbox{s.t.} & \diag(X)=\ones,X\succeq 0,\\
\EA\EEQ
with, in particular, $L_\mathrm{mxct}(F)\leq\sqrt{n}\|F\|_2$.
\end{lemma}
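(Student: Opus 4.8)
The plan is to mirror the proof of Lemma~\ref{lem:cond-left}: write $\|Fy\|_1$ as a supremum of Gaussian variables and apply the same concentration machinery, now to the lower tail. Since $\|Fy\|_1 = \max_{v\in\{-1,1\}^n} v^TFy$, it is the supremum of the Gaussian family indexed by $v\in\{-1,1\}^n$, each term $v^TFy$ having variance $v^TFF^Tv$. The maximal variance is therefore $L^2(F) = \max_{v\in\{-1,1\}^n}v^TFF^Tv$, and the lower-tail Gaussian concentration bound of \cite[Th. 3.12]{Mass07} (equivalently, Gaussian concentration for the Lipschitz map $y\mapsto\|Fy\|_1$, whose $\ell_2$-Lipschitz constant is exactly $L(F)$) yields the stated inequality.

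For the expectation I would use linearity together with the fact that the $i$-th coordinate $(Fy)_i=F_iy$ is $\mathcal N(0,\|F_i\|_2^2)$; since $\Expect|Z|=\sigma\sqrt{2/\pi}$ for $Z\sim\mathcal N(0,\sigma^2)$, summing over $i$ gives $\Expect[\|Fy\|_1]=\sqrt{2/\pi}\sum_{i=1}^n\|F_i\|_2$. To identify $L^2(F)$ with $\sigma_n^2(F)$, I would note that in~\eqref{eq:sigma} with $k=n$ the vector $v=u_+-u_-$ ranges over $\{-1,0,1\}^n$; because $FF^T\succeq0$, flipping any zero coordinate $i$ to a suitable sign $\pm1$ changes $v^TFF^Tv$ by $(FF^T)_{ii}+2\bigl|\sum_{j\neq i}(FF^T)_{ij}v_j\bigr|\geq0$, so the maximum is always attained on $\{-1,1\}^n$ and the two optima coincide.

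The remaining work is the pair of MaxCut relaxation inequalities. The upper bound $L^2(F)\leq L^2_\mathrm{mxct}(F)$ is immediate: for any $v\in\{-1,1\}^n$ the rank-one matrix $X=vv^T$ satisfies $\diag(X)=\ones$ and $X\succeq0$, hence is feasible, and $v^TFF^Tv=\Tr(XFF^T)$. The lower bound $\tfrac{2}{\pi}L^2_\mathrm{mxct}(F)\leq L^2(F)$ is Nesterov's semidefinite rounding result for a positive semidefinite objective: factoring an optimal $X^\star=B^TB$ with unit columns $b_i$ and sampling $v_i=\mathrm{sign}(b_i^Tr)$ for $r$ uniform on the sphere gives $\Expect[v_iv_j]=\tfrac{2}{\pi}\arcsin(X^\star_{ij})$, so that $\Expect[v^TFF^Tv]=\tfrac{2}{\pi}\Tr\bigl(FF^T\arcsin(X^\star)\bigr)$. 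The crucial step is that the entrywise matrix $\arcsin(X^\star)-X^\star$ is positive semidefinite (expand $\arcsin t=t+t^3/6+\cdots$ with nonnegative coefficients and apply the Schur product theorem to the entrywise odd powers of $X^\star$, valid since $\diag(X^\star)=\ones$ forces $|X^\star_{ij}|\leq1$); pairing against $FF^T\succeq0$ then gives $\Tr(FF^T\arcsin(X^\star))\geq\Tr(FF^TX^\star)=L^2_\mathrm{mxct}(F)$, and the maximum over the cube dominates the expectation. Finally $L_\mathrm{mxct}(F)\leq\sqrt n\,\|F\|_2$ follows from $\Tr(XFF^T)\leq\|FF^T\|_2\,\Tr(X)=\|F\|_2^2\,n$, using $\diag(X)=\ones$.

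I expect the $2/\pi$ lower bound to be the main obstacle: unlike the expectation computation, the feasibility argument, and the operator-norm estimate, it is not a routine calculation but rests on Nesterov's rounding together with the positive semidefiniteness of the Hadamard-corrected matrix $\arcsin(X^\star)-X^\star$. This is precisely where the \emph{sign structure} of the objective enters, namely the fact that $FF^T$ is positive semidefinite, so the same obstruction would reappear for a general (indefinite) weight matrix.
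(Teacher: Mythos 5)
Your proposal is correct and follows essentially the same route as the paper: write $\|Fy\|_1$ as a supremum of Gaussian variables, apply the Gaussian concentration bound of \cite[Th.~3.12]{Mass07} with the maximal variance $L^2(F)$ as the Lipschitz constant, compute the mean coordinatewise, and bound $L^2(F)$ by the semidefinite MaxCut relaxation. The only differences are cosmetic: you spell out the Goemans--Williamson/Nesterov rounding argument for the $2/\pi$ factor where the paper simply cites \citet{goem95} and \citet{Nest98a}, and you obtain $L_\mathrm{mxct}(F)\leq\sqrt{n}\|F\|_2$ from the primal inequality $\Tr(XFF^T)\leq\lambda_{\max}(FF^T)\Tr(X)$ rather than from the dual feasible point $w=\lambdamax(FF^T)\ones$, which is the same estimate seen from the other side.
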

\begin{proof} We can write
\[
\|Fy\|_1=\max_{v\in\{-1,1\}^n}v^TFy
\]
and \cite[Th. 3.12]{Mass07} shows that
\[
\Prob\left[\|Fy\|_1 \leq \Expect[\|Fy\|_1] - x \right]\leq e^{-\frac{x^2}{2L^2(F)}}.
\]
The fact that $\Expect[|g|]=\sqrt{2/\pi}V$ whenever $g\sim{\mathcal N}(0,V^2)$ produces the expectation, and the Lipschitz constant $L^2(F)$ in this inequality is given by the largest variance
\[
L^2(F)=\max_{v\in\{-1,1\}^n}v^TFF^Tv,
\]
hence is the solution of a graph partitioning problem similar to MaxCut. Relaxation results in \citep{goem95} (in the case where the matrix is nonnegative) and \citep{Nest98a} show that this combinatorial problem can be bounded by solving
\[\BA{ll}
\mbox{maximize} & \Tr(XFF^T)\\
\mbox{subject to} & \diag(X)=\ones,X\succeq 0,\\
\EA\]
which is a semidefinite relaxation in $X\in\symm_n$ of the maximum variance problem (tight up to a factor $\pi/2$). Its dual is written
\[\BA{ll}
\mbox{minimize} & \ones^Tw\\
\mbox{subject to} & FF^T\preceq \diag(w),\\
\EA\]
which is another semidefinite program in the variable $w\in\reals^n$. By weak duality, any feasible point of this last problem gives an upper bound on $L_\mathrm{mxct}(F)$. In particular, the point $w=\lambdamax(FF^T)\ones$ is dual feasible and yields $L_\mathrm{mxct}(F)\leq\sqrt{n}\|F\|_2$.
\end{proof}

The bound detailed in Lemma~\ref{lem:cond-right} is directly related to the $MatrixNorm$ problem discussed in \cite{Nemi01} and \cite{Stei05} or the spin glass models of statistical mechanics. In particular, our approximation bound on $L(F)$ can be directly deduced from the bound on the induced matrix norm~$\|\cdot\|_{2,1}$ derived in \citet[Prop. 1.4]{Stei05}. Note also that the mean $\Expect[\|Fy\|_1]=\sqrt{{2}/{\pi}} \sum_{i=1}^n \|F_i\|_2$ is typically much larger than the factor $L(F)$ controlling concentration. In fact, we can write $\sum_{i=1}^n \|F_i\|_2 = \|F\|_F {\NumCard(\{\|F_i\|_2\})}^{1/2} = \|F\|_2 {\NumRank(F)^{1/2}\NumCard(\{\|F_i\|_2\})^{1/2}}$.
Combining the last two lemmas, we show the following proposition, which is our main recovery condition.

\begin{proposition}\label{prop:weak-cond}
If $F\in\reals^{n\times m}$ satisfies
\BEQ\label{eq:weak-cond}
\left(\sqrt{2k\left(1+\log\frac{2n}{k}\right)}+\beta \right)\sigma_k(F) \leq  \left(\sqrt{\frac{2}{\pi}} \sum_{i=1}^n \|F_i\|_2 - \beta L(F)  \right) \alpha_k
\EEQ
for some $\beta>0$, where $\sigma_k(F)$ was defined in (\ref{eq:sigma}) and $L(F)$ in (\ref{eq:L}), then the sparse recovery condition (\ref{eq:ineq-rnd}) will be satisfied with probability $1-2e^{-\beta^2/2}$ when $y\sim{\mathcal N}(0,\idm_{m})$.
\end{proposition}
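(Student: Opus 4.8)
The plan is to combine the two deviation bounds of Lemma~\ref{lem:cond-left} and Lemma~\ref{lem:cond-right} through a single union bound, choosing the free deviation parameter $x$ in each tail estimate so that both produce the matching probability $e^{-\beta^2/2}$. Since the target inequality~(\ref{eq:ineq-rnd}) compares an \emph{upper} tail of $\|Fy\|_{k,1}$ against a \emph{lower} tail of $\|Fy\|_1$, I would first control each side separately on its own ``good'' event, then intersect the two events and chain the resulting inequalities using the hypothesis~(\ref{eq:weak-cond}).

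First I would apply Lemma~\ref{lem:cond-left} with $x=\beta\,\sigma_k(F)$, which makes the exponent exactly $-\beta^2/2$. Thus with probability at least $1-e^{-\beta^2/2}$ one has $\|Fy\|_{k,1}\leq \Expect[\|Fy\|_{k,1}]+\beta\,\sigma_k(F)$, and bounding the mean by the explicit estimate $\Expect[\|Fy\|_{k,1}]\leq\sigma_k(F)\sqrt{2k(1+\log(2n/k))}$ supplied by the same lemma collapses the right-hand side to $\bigl(\sqrt{2k(1+\log(2n/k))}+\beta\bigr)\sigma_k(F)$, which is precisely the left side of~(\ref{eq:weak-cond}). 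Symmetrically, applying Lemma~\ref{lem:cond-right} with $x=\beta\,L(F)$ yields the same tail probability, and on that event $\|Fy\|_1\geq \Expect[\|Fy\|_1]-\beta\,L(F)=\sqrt{2/\pi}\sum_{i=1}^{n}\|F_i\|_2-\beta\,L(F)$, so that $\alpha_k\|Fy\|_1$ dominates the right side of~(\ref{eq:weak-cond}).

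A union bound over the complements of the two good events then shows both estimates hold simultaneously with probability at least $1-2e^{-\beta^2/2}$, and on that intersection the hypothesis~(\ref{eq:weak-cond}) lets me chain
\[
\|Fy\|_{k,1}\leq \Bigl(\sqrt{2k(1+\log(2n/k))}+\beta\Bigr)\sigma_k(F)\leq \Bigl(\sqrt{\tfrac{2}{\pi}}\textstyle\sum_{i=1}^{n}\|F_i\|_2-\beta\,L(F)\Bigr)\alpha_k\leq \alpha_k\|Fy\|_1,
\]
which is exactly the sparse recovery condition~(\ref{eq:ineq-rnd}).

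There is no genuine analytic obstacle here, since all the probabilistic content already resides in the two lemmas; the only point requiring care is bookkeeping, namely matching the two distinct deviation scales $\sigma_k(F)$ and $L(F)$ to the single parameter $\beta$ so that the tails coincide, and verifying that the lower estimate $\sqrt{2/\pi}\sum_i\|F_i\|_2-\beta\,L(F)$ is nonnegative so that multiplying by the positive factor $\alpha_k$ preserves the inequality in the final step. The latter is automatic under~(\ref{eq:weak-cond}): that quantity is bounded below by the nonnegative left side of~(\ref{eq:weak-cond}) divided by $\alpha_k$, hence is itself nonnegative.
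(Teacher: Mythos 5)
Your proof is correct and is exactly the argument the paper intends: its own proof is the one-line remark that one combines Lemmas~\ref{lem:cond-left} and~\ref{lem:cond-right} via a union bound, and you have simply written out the bookkeeping (choosing $x=\beta\sigma_k(F)$ and $x=\beta L(F)$ so both tails equal $e^{-\beta^2/2}$, then chaining through~(\ref{eq:weak-cond})). The only superfluous point is the nonnegativity check at the end, since multiplying both sides of $\|Fy\|_1\geq c$ by the nonnegative constant $\alpha_k$ preserves the inequality regardless of the sign of $c$.
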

\begin{proof}
We combine the bounds of Lemmas \ref{lem:cond-left} and \ref{lem:cond-right}, requiring them to hold with probability $1-e^{-\beta^2/2}$.
\end{proof}

We finish this section by showing that the function $\sigma_k(F)$ defined in (\ref{eq:sigma}) is strictly increasing with $k$ whenever the diagonal of $FF^T$ is positive, which will prove useful in the results that follow.
\begin{lemma}\label{lem:sigma-inc}
Let $F\in\reals^{n\times m}$, the function $\sigma_k(F)$ is strictly increasing in $k\in[1,n]$ whenever the diagonal of $FF^T$ is positive, with
\[
\sigma_1(F)=\max_{i=1,\ldots,n} (FF^T)_{ii} \quad \mbox{and} \quad \sigma_n(F)=L(F)
\]
where $L(F)$ is defined in Lemma \ref{lem:cond-right}.
\end{lemma}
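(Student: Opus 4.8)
The plan is to view $\sigma_k^2(F)$ as the optimal value of a combinatorial program whose feasible set grows with $k$, so that monotonicity is immediate, and then to identify the two endpoints separately. Expanding the Kronecker product in \eqref{eq:sigma}, I would first rewrite
\[
\sigma_k^2(F)=\max\Bigl\{(u_+-u_-)^TFF^T(u_+-u_-)\ :\ u_+,u_-\in\{0,1\}^n,\ \ones^T(u_++u_-)\le k\Bigr\},
\]
and introduce $w:=u_+-u_-$ together with $g(w):=w^TFF^Tw=\|F^Tw\|_2^2$. Since $u_+,u_-\in\{0,1\}^n$, the difference $w$ always lies in $\{-1,0,1\}^n$, and conversely every $w\in\{-1,0,1\}^n$ is realized by the disjoint supports $u_+=\max(w,0)$, $u_-=\max(-w,0)$, for which $\ones^T(u_++u_-)=\|w\|_0$. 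Thus $\sigma_k^2(F)=\max\{g(w):w\in\{-1,0,1\}^n,\ \|w\|_0\le k\}$.

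Monotonicity is then the easy step: the feasible set $\{w\in\{-1,0,1\}^n:\|w\|_0\le k\}$ is nested increasing in $k$, so the maximum of the fixed objective $g$ is nondecreasing in $k$; taking square roots (a monotone operation) shows $\sigma_k(F)$ is nondecreasing on $\{1,\dots,n\}$. For the lower endpoint $k=1$, the feasible $w$ are exactly $0$ and $\pm e_i$, giving $g(\pm e_i)=(FF^T)_{ii}$, so that the first boundary identity is really $\sigma_1^2(F)=\max_i (FF^T)_{ii}$ (the square root appearing implicitly in the stated display).

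The upper endpoint $k=n$ is the one real point of the argument, and I expect it to be the main obstacle. Here the constraint $\|w\|_0\le n$ is vacuous, so $\sigma_n^2(F)=\max_{w\in\{-1,0,1\}^n}g(w)$. The inequality $\sigma_n^2(F)\ge L^2(F)$ is trivial, since every $v\in\{-1,1\}^n$ is feasible. For the reverse inequality I would use that $FF^T\succeq0$, hence $g$ is convex; the maximum of a convex function over the polytope $[-1,1]^n$, which is the convex hull of $\{-1,0,1\}^n$, is attained at a vertex, and the vertices are exactly $\{-1,1\}^n$. Therefore $\max_{w\in\{-1,0,1\}^n}g(w)=\max_{v\in\{-1,1\}^n}v^TFF^Tv=L^2(F)$, giving $\sigma_n(F)=L(F)$. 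The crux is this vertex-maximization step, which rests essentially on the positive semidefiniteness of $FF^T$: it is what rules out the a priori possibility that leaving some coordinates at $0$, rather than forcing them to $\pm1$, could increase the objective.
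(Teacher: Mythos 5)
Your proof is correct, and it reaches the two endpoints by a different mechanism than the paper. The reformulation itself is essentially the paper's: the authors write $\sigma_k^2(F)=\max u^T(vv^T\circ FF^T)u$ over $v\in\{0,1\}^n$ with $\ones^Tv\le k$ and $u\in\{-1,1\}^n$, which is your $w=u\circ v\in\{-1,0,1\}^n$ with $\|w\|_0\le k$. Where you diverge is in how monotonicity and the identity $\sigma_n(F)=L(F)$ are established. You get monotonicity for free from the nesting of the feasible sets $\{\|w\|_0\le k\}$ and then handle $k=n$ by a separate convexity argument: $g(w)=w^TFF^Tw$ is convex, so its maximum over $\{-1,0,1\}^n\subset[-1,1]^n$ is attained at a vertex of the cube, i.e.\ in $\{-1,1\}^n$. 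The paper instead runs a single explicit increment computation: augmenting the support by one index $i$ changes the objective by $(FF^T)_{ii}+2u_i\sum_{j\in J}u(k)_j(FF^T)_{ij}$, and choosing the sign of $u_i$ makes this at least $(FF^T)_{ii}\ge 0$. That one computation simultaneously yields monotonicity (with a quantitative lower bound on the increment, showing strict increase whenever the relevant diagonal entries are positive) and the fact that at $k=n$ there is an optimal solution with full support, whence $\sigma_n(F)=L(F)$. Both routes ultimately rest on $FF^T\succeq 0$ — yours through convexity of $g$, the paper's through nonnegativity of the diagonal plus the free sign choice — and both are valid; yours is arguably the more transparent proof of the lemma as literally stated, while the paper's gives slightly more information. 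You are also right that the first endpoint identity should read $\sigma_1^2(F)=\max_i(FF^T)_{ii}$ (equivalently $\sigma_1(F)=\max_i\|F_i\|_2$); the missing square root is a typo in the statement, not a flaw in your argument.
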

\begin{proof} We can write
\BEAS
\sigma_k^2(F) &= &\max_{\{(u_+,u_-)\in\{0,1\}^{2n},\ones^Tu\leq k\}}\left\|(u_+-u_-)^TF\right\|^2_2\\
&=& \max_{\{v\in\{0,1\}^{n}, \ones^Tv\leq k, u\in\{-1,1\}^{n}\}}u^T(vv^T\circ FF^T)u.
\EEAS
Let us call $v(k),u(k)$ the optimal solutions of the maximization problem with optimal value~$\sigma_k^2(F)$, and let $J=\{i\in[1,n]:v(k)_i\neq 0\}$ be the support of $v(k)$. If we pick $i\in[1,n]$, outside of $J$, we have
\BEAS
\sigma_{k+1}^2(F) & \geq & u(k)^T(v(k)v(k)^T\circ FF^T)u(k) +(FF^T)_{ii} + \max_{u_i\in\{-1,1\}} 2 u_i\left( \sum_{j\in J} u(k)_j (FF^T)_{ij}\right)\\
& = & \sigma_k^2(F) +(FF^T)_{ii} +  2 \left| \sum_{j\in J} u(k)_j(FF^T)_{ij}\right|
\EEAS
Hence the difference between $\sigma_{k+1}^2(F)$ and $\sigma_{k}^2(F)$ is at least $\max_{j\in J} (FF^T)_{jj}$. This means that $\sigma_k(F)$ is increasing and bounded by
\[
\max_{u\in\{-1,1\}^n}u^TFF^Tu,
\]
which is the maximization problem defining $L^2(F)$ in Lemma~\ref{lem:cond-right}.
\end{proof}

\subsection{Independent, bounded model}
\label{s:weak-cond-bnd}
The previous section showed that enforcing condition~(\ref{eq:ineq-rnd}) with high probability for Gaussian vectors $y$ meant controlling the ratio between the Lipschitz constant $\sigma_k(F)$ of the norm $\|Fy\|_{1,k}$ and the norm $\sum_{i=1}^n \|F_i\|_2$. In what follows, we will show that the same quantities control the concentration of $\|Fy\|_{1,k}$ and $\|Fy\|_1$ when the coefficients of $y$ are independent and bounded. Once again, because $F$ is defined up to a rotation here, these results are easily extended to the case where $y=Qu$ with $Q^TQ=\idm$ and the variables $u$ are independent and bounded. We can write a weak recovery condition for this bounded model, similar to condition (\ref{eq:weak-cond}).

\begin{proposition}\label{prop:bounded-ok}
Let $F\in\reals^{n \times m}$ and suppose
\BEQ\label{eq:weak-cond-bnd}
\Expect[\|Fy\|_{1,k}] + \beta \sigma_k(F) \leq (\Expect[\|Fy\|_{1}] -\beta L(F))\alpha_k
\EEQ
for some $\beta>0$, where $\sigma_k(F)$ was defined in (\ref{eq:sigma}) and $L(F)$ in (\ref{eq:L}), then the sparse recovery condition (\ref{eq:ineq-rnd})
\[
\|Fy\|_{k,1} \leq \alpha_k \|Fy\|_1
\]
will be satisfied with probability $1-2ce^{-\beta^2/c\Delta^2}$, where $c>0$ is an absolute constant, when the coefficients of $y\in\reals^m$ are independent and bounded, with $\|y\|_\infty\leq \Delta$.
\end{proposition}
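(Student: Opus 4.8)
The plan is to mirror the structure of the Gaussian proof in Proposition~\ref{prop:weak-cond}, replacing Gaussian concentration with a concentration inequality valid for functions of independent bounded random variables. The key observation is that the two quantities we need to control, namely $\|Fy\|_{k,1}$ and $\|Fy\|_1$, are each maxima of linear forms in $y$ and hence \emph{convex} Lipschitz functions of $y$. Concretely, $\|Fy\|_{k,1} = \max_{v\in\{-1,0,1\}^n,\ \ones^T|v|\leq k} v^TFy$ and $\|Fy\|_1 = \max_{v\in\{-1,1\}^n} v^TFy$, so both are suprema of a family of linear functions and are therefore convex in $y$. Since each individual linear form $v^TFy = (F^Tv)^Ty$ has Lipschitz constant $\|F^Tv\|_2$ with respect to $y$, the supremum over the relevant index set is Lipschitz with constant exactly $\sigma_k(F)$ for the $\|\cdot\|_{k,1}$ functional and $L(F)$ for the $\|\cdot\|_1$ functional. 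This is precisely why the \emph{same} combinatorial quantities $\sigma_k(F)$ and $L(F)$ reappear in the bounded model.

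The main technical step is to invoke a concentration result for convex Lipschitz functions of independent bounded variables. The natural tool here is Talagrand's convex concentration inequality (see e.g.\ \cite{Mass07}), which states that if $g:\reals^m\to\reals$ is convex and $1$-Lipschitz with respect to the Euclidean norm, and if $y$ has independent coordinates with $\|y\|_\infty\leq\Delta$, then $\Prob[\,|g(y)-\Median(g(y))|\geq x\,]\leq c\,e^{-x^2/(c\Delta^2)}$ for an absolute constant $c>0$. An entirely parallel statement holds with the median replaced by the mean at the cost of adjusting the constant. I would apply this once in the upper-tail direction to $g(y)=\|Fy\|_{k,1}$, which is convex and $\sigma_k(F)$-Lipschitz, obtaining
\[
\Prob\left[\|Fy\|_{k,1}\geq \Expect[\|Fy\|_{k,1}] + \beta\,\sigma_k(F)\right]\leq c\,e^{-\beta^2/(c\Delta^2)},
\]
and once in the lower-tail direction to $g(y)=\|Fy\|_1$, which is convex and $L(F)$-Lipschitz, obtaining the symmetric bound $\Prob[\|Fy\|_1\leq \Expect[\|Fy\|_1]-\beta\,L(F)]\leq c\,e^{-\beta^2/(c\Delta^2)}$.

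Once these two tail bounds are in hand, the proof concludes exactly as in the Gaussian case. On the complementary event, which by a union bound has probability at least $1-2c\,e^{-\beta^2/(c\Delta^2)}$, we simultaneously have $\|Fy\|_{k,1}\leq \Expect[\|Fy\|_{k,1}]+\beta\,\sigma_k(F)$ and $\|Fy\|_1\geq \Expect[\|Fy\|_1]-\beta\,L(F)$. Chaining these with the deterministic hypothesis~\eqref{eq:weak-cond-bnd} gives $\|Fy\|_{k,1}\leq (\Expect[\|Fy\|_1]-\beta\,L(F))\,\alpha_k\leq \alpha_k\|Fy\|_1$, which is precisely the probabilistic NSP condition~\eqref{eq:ineq-rnd}. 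I expect the main obstacle to be a clean justification that both functionals are genuinely Lipschitz with the claimed constants $\sigma_k(F)$ and $L(F)$ rather than merely bounded by them: one must verify that the Lipschitz constant of a supremum of linear forms equals the largest Lipschitz constant among those forms, which in turn requires recognizing $\sigma_k(F)$ and $L(F)$ as the maximal Euclidean norms $\|F^Tv\|_2$ over the respective index sets, exactly as defined in~\eqref{eq:sigma} and~\eqref{eq:L}. A secondary subtlety is the passage from median to mean in Talagrand's inequality, which only affects the absolute constant $c$ and is harmless for the statement as phrased.
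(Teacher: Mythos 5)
Your proposal matches the paper's proof: the paper likewise notes that $\|Fy\|_{k,1}$ and $\|Fy\|_1$ are convex Lipschitz functions of $y$ with constants bounded by $\sigma_k(F)$ and $L(F)$, invokes Talagrand's concentration inequality for independent bounded coordinates (citing \citet[Corr.~4.10]{Ledo05}), and concludes by a union bound exactly as you do. Your added remarks on the median-versus-mean issue and on the Lipschitz constants being upper bounds (which is all that is needed) are fine and do not change the argument.
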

\begin{proof} As pointwise suprema of affine functions in $y$, the functions $\|Fy\|_{1,k}$ and $\|Fy\|_{1}$ are convex and Lipschitz with constants bounded by $\sigma_k(F)$ and $L(F)$ respectively (see the proofs of Lemmas \ref{lem:cond-left} and \ref{lem:cond-right}). If the coefficients of $y\in\reals^m$ are independent and bounded, with $\|y\|_\infty\leq \Delta$, Talagrand's inequality \citep[Corr. 4.10]{Ledo05} then shows that
\[
\Prob\left[|\Expect[\|Fy\|_{1,k}]-\|Fy\|_{1,k}|\geq t  \right] \leq C e^{-\frac{t^2}{c\sigma_k^2(F)\Delta^2}}
\]
and
\[
\Prob\left[|\Expect[\|Fy\|_{1}]-\|Fy\|_{1}|\geq t  \right] \leq C e^{-\frac{t^2}{cL^2(F)\Delta^2}}
\]
where $c$ is an absolute constant, hence the desired result.
\end{proof}

The parallel with the Gaussian case can be made even more explicit using the following simple majorization result.

\begin{lemma} \label{lem:majoriz}
Let $V\subset\reals^n$ be a finite set. Suppose the variables $\{y_i\}_{i=1,\ldots,n}$ are independent with support in $[-1,1]$, then
\[
\Expect[\sup_{v \in V} v^Ty] \leq \sigma \sqrt{\pi \log |V|}
\]
where $\sigma=\max_{v\in V} \|v\|_2$.
\end{lemma}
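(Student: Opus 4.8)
The plan is to reduce the bounded case to the Gaussian case already treated in Lemma~\ref{lem:cond-left}, via the convex order $\preceq_c$ introduced in the notation. Since $\Phi(y) := \sup_{v\in V} v^T y$ is a supremum of linear functions, it is convex in $y$; hence if I can establish the multivariate convex dominance $y \preceq_c g$, where $g=(g_1,\ldots,g_n)$ has i.i.d.\ coordinates $g_i \sim \mathcal{N}(0,\pi/2)$, then $\Expect[\sup_{v\in V} v^T y] \leq \Expect[\sup_{v\in V} v^T g]$ and I am left with a maximum of Gaussians. Each $v^T g$ is $\mathcal{N}(0,\tfrac{\pi}{2}\|v\|_2^2)$, with standard deviation at most $\sqrt{\pi/2}\,\sigma$, so the Gaussian maximal inequality \cite[Lem. 2.3]{Mass07} (the very bound used in Lemma~\ref{lem:cond-left}) gives $\Expect[\sup_{v\in V} v^T g] \leq \sqrt{\pi/2}\,\sigma\,\sqrt{2\log|V|} = \sigma\sqrt{\pi\log|V|}$, which is exactly the claim.

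It remains to establish $y\preceq_c g$, which I would build coordinatewise and then tensorize. Assuming the $y_i$ are centered (as in the symmetric model underlying this section), the first step is a one-dimensional comparison with a Rademacher variable $\epsilon_i$: for any convex $f$ and $x\in[-1,1]$, convexity gives $f(x) \leq \tfrac{1-x}{2} f(-1)+\tfrac{1+x}{2} f(1)$, so taking expectations with $\Expect[y_i]=0$ yields $\Expect[f(y_i)] \leq \tfrac12(f(-1)+f(1)) = \Expect[f(\epsilon_i)]$, i.e.\ $y_i \preceq_c \epsilon_i$. The second step is $\epsilon_i \preceq_c g_i$ with $g_i\sim\mathcal{N}(0,\pi/2)$: the variance $\pi/2$ is calibrated exactly so that $\Expect|g_i| = \sqrt{2/\pi}\,\sqrt{\pi/2} = 1 = \Expect|\epsilon_i|$, and I would verify convex dominance through the stop-loss transforms, checking $\tfrac12(1-a)_+ + \tfrac12(-1-a)_+ \leq \Expect[(g_i-a)_+]$ for all $a$; on the only nontrivial range $a\in[-1,1]$ this reduces to showing that $h(a):=\Expect[(g_i-a)_+]-\tfrac12(1-a)$ satisfies $h(0)=0$ with $h'(a)=\tfrac12-\Prob[g_i>a]$ having the right sign on either side of $0$, while $|a|>1$ is handled by the trivial bounds $\Expect[(g_i-a)_+]\geq 0$ and $\Expect[(g_i-a)_+]\geq \Expect[g_i-a]=-a$. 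Finally I would promote these scalar comparisons to the vector convex order by a hybrid argument: replacing $y_i$ by $g_i$ one coordinate at a time, using that $\Phi$ with all other coordinates fixed is still convex in the free one, together with independence, so each swap can only increase the expectation.

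The main obstacle is the second scalar step $\epsilon_i \preceq_c g_i$: the factor $\pi$ (rather than $2$) in the statement originates entirely here, and obtaining it requires the sharp stop-loss verification for the correctly scaled Gaussian, not the cruder Hoeffding sub-Gaussian constant, which would replace $\sqrt{\pi}$ by $\sqrt{2}$. The tensorization step is routine but genuinely needs independence of the coordinates, and the whole comparison tacitly relies on the $y_i$ being centered; for merely bounded, non-centered coordinates the clean Gaussian dominance breaks, so the statement is to be read under the symmetric model used throughout this section.
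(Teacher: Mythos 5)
Your proof follows exactly the paper's route: convex-order domination $y \preceq_c {\mathcal N}(0,\tfrac{\pi}{2}\idm_n)$, convexity of the supremum $\sup_{v\in V}v^Ty$, and the Gaussian maximal inequality, the only difference being that you prove the domination (the Rademacher comparison, the calibrated stop-loss check for $\epsilon\preceq_c{\mathcal N}(0,\pi/2)$, and coordinatewise tensorization) where the paper simply cites \citet[Prop.\,10.3.2]{Ben09}. Your remark that the argument requires the $y_i$ to be centered is correct and worth keeping: that hypothesis is implicit in the cited result but missing from the lemma's statement as written, and without it the bound fails (e.g.\ for deterministic $y_i=1$ and $|V|=1$).
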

\begin{proof}
If the variables $y_i$ are independent, supported in $[-1,1]$, then $y \preceq_c g$ where $g\sim {\mathcal N}(0,\frac{\pi}{2} \idm_n) $ is a Gaussian vector \citep[Prop.\,10.3.2]{Ben09}. The supremum $\sup_{v \in V} v^Ty$ is a pointwise maximum of affine functions of $y$, hence is convex in $y$, so $y \preceq_c g$ implies $\Expect[\sup_{v \in V} v^Ty]\leq \Expect[\sup_{v \in V} v^Tg]$. Finally, \citep[Th.\,3.12]{Mass07} shows that $\Expect[\sup_{v \in V} v^Tg]\leq \sigma \sqrt{\pi \log |V|} $.
\end{proof}

If we take $V$ in Lemma~\ref{lem:majoriz} to be the set of vectors of size $n$ with exactly $k$ entries equal to $+1$ or $-1$, and $n-k$ zeroes, this result shows that, when the coefficients of $y$ are supported on $[-1,1]$ and independent, then $\Expect[\|Fy\|_{1,k}]$ is bounded by $\frac{\pi}{2}\Expect[\|Fg\|_{1,k}]$ with $g$ Gaussian. Alternatively, both expectations  in (\ref{eq:weak-cond-bnd}) can be evaluated efficiently. In fact Hoeffding's inequality shows that if we need to estimate these quantities with precision $\epsilon$ and confidence $1-\beta$, we need at least $N$ samples of either $\|Fy\|_{1,k}$ or $\|Fy\|_1$, with
\[
N= \frac{D^2 \log(2/\beta)}{2 \epsilon^2}
\]
where $D=\max_{\|y\|_\infty\leq \Delta} \|Fy\|_1$ is an upper bound on both norms whenever $\|y\|_\infty\leq \Delta$.

\section{Weak recovery and restricted isometry} \label{s:rip}

In this section, we show that some random matrices satisfy our weak recovery condition~\eqref{eq:weak-cond} for near optimal values of the cardinality $k$ (i.e. in scenarios where the number $m$ of linear samples required to recover a signal is a small multiple of the number $k$ of nonzero components in that signal). We show in particular that in some cases, matrices satisfying the restricted isometry property defined in \citet{Cand05} also satisfy condition~\eqref{eq:weak-cond}. Here however, the restricted isometry is tested on the nullspace basis~$F$ instead of the coding matrix~$A$, so the compressed sensing interpretation is lost, but this connection allows us to {\em recycle all known results on restricted isometry thresholds for random matrices} and easily derive weak recovery thresholds from condition~\eqref{eq:weak-cond}. In the next section, we will see that the most important difference between RIP and the weak condition detailed here is that~\eqref{eq:weak-cond} can be tested efficiently while RIP is intractable. Here, we simply check that our weak condition~\eqref{eq:weak-cond} is indeed satisfied by good coding matrices.

We first show that the $k$-Dense-Subgraph problem computing $\sigma_k(F)$ in~\eqref{eq:sigma} is inherently simpler than the sparse eigenvalue problem used in testing the restricted isometry property. We then show that matrices $F$ such that $F^T$ satisfies the restricted isometry property defined in \citet{Cand05} at a near-optimal cardinality $k$, also satisfy our weak recovery condition~(\ref{eq:weak-cond}) for similar values of $k$. This allows us to recycle all known results on the RIP for random matrices and show in particular that Gaussian matrices satisfy condition~\eqref{eq:weak-cond} at near optimal values of $k$. 

Roughly speaking, our main objective here is to show that for good coding matrices $\sigma_k(F)$ grows as $\sqrt{k}$ while $L(F)$ is of order $\sqrt{n}$ and $\sum_{i=1}^n \|F_i\|_2$ is of order $n$ (up to a normalizing factor in condition~\eqref{eq:weak-cond}). For completeness, we have also included a direct proof of these facts in the appendix, using standard concentration arguments instead of RIP.

\subsection{Sparse eigenvalues and $k$-Dense-Subgraph.} We will see in the next section that approximating the $k$-Dense-Subgraph problem is significantly easier than testing RIP or the nullspace property. There is in fact a direct connection between the sparse eigenvalue and $k$-Dense-Subgraph problems. The $k$-Dense-Subgraph problem used in bounding $\sigma_k(F)$ is written
\[
\sigma^2_k(F)=\max_{\substack{u\in\{0,1\}^{2n}\\\ones^Tu\leq k}} ~ u^TMu \quad\mbox{where}\quad
M=
{\scriptsize \left(\BA{rr}
1 & -1\\
-1 & 1\\
\EA\right)}
\otimes FF^T
\]
in the variable $u\in\{0,1\}^{2n}$. On the other hand, the problem of computing a sparse maximum eigenvalue to check the restricted isometry property can be written
\[
\lambda^k_{\mathrm{max}}(FF^T) = \max_{\substack{u\in\{0,1\}^n\\\ones^Tu\leq k}} ~\max_{\|x\|=1} ~ u^T(FF^T\circ xx^T)u
\]
in the variables $x\in\reals^n$, $u\in\{0,1\}^n$. We observe that computing sparse eigenvalues (hence test RIP) means solving a $k$-Dense-Subgraph problem over the result of an inner eigenvalue problem in $x$, while bounding~$\sigma_k(F)$ only requires solving a $k$-Dense-Subgraph problem over a fixed matrix $M$, hence is significantly easier.

\subsection{Weak NSP and random matrices}

Following \citet{Cand05}, we will say that a matrix $A\in\reals^{m\times n}$ satisfies the {\em restricted isometry property} (RIP) at cardinality $k>0$ if there is a constant $\delta_k>0$ such that
\[
\|x\|^2_2 (1-\delta_k) \leq \|Ax\|^2_2 \leq (1+\delta_k) \|x\|^2_2
\]
for all sparse vectors $x\in\reals^n$ such that $\Card(x)\leq k$. We now show that the RIP allows us to closely control the values of $\sigma_k(F)$ and $L(F)$. This will allow us to directly recycle all known results on RIP for random matrices and apply them to the weak recovery condition considered here. We start by a technical lemma bounding the values of $\sigma_k(F)$ and $\|F_i\|_2$ for RIP matrices.

\begin{lemma}\label{lem:rip-sigma}
Suppose the matrix $F^T\in\reals^{m\times n}$ satisfies the restricted isometry property with constant $\delta_k>0$ at cardinality $k$, then
\BEQ\label{eq:weak-rip}
\sigma_k(F)\leq \sqrt{k(1+\delta_k)} \quad \mbox{and} \quad \|F_i\|_2 \geq \sqrt{1-\delta_1}.
\EEQ
and $(k/n)^2 L^2(F)\leq \sigma^2_k(F)$.
\end{lemma}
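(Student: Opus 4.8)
The plan is to treat the three inequalities separately, using in each case the combinatorial description of $\sigma_k$ extracted in the proof of Lemma~\ref{lem:sigma-inc}, namely
\[
\sigma_k^2(F)=\max\{\, w^TFF^Tw \;:\; w\in\{-1,0,1\}^n,\ \Card(w)\leq k\,\}.
\]
For the first bound, observe that any feasible $w$ satisfies $\|w\|_2^2=\Card(w)\leq k$ and $w^TFF^Tw=\|F^Tw\|_2^2$; since $\Card(w)\leq k$, the RIP on $F^T$ gives $\|F^Tw\|_2^2\leq(1+\delta_k)\|w\|_2^2\leq k(1+\delta_k)$, hence $\sigma_k(F)\leq\sqrt{k(1+\delta_k)}$. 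For the row bound, I would apply the RIP at cardinality one to the standard basis vector $e_i$, which has $\Card(e_i)=1$, so that $\|F_i\|_2^2=(FF^T)_{ii}=\|F^Te_i\|_2^2\geq 1-\delta_1$.

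The third inequality is where the real work lies, and I would prove it by an averaging argument. Let $v^\ast\in\{-1,1\}^n$ attain $L^2(F)=(v^\ast)^TFF^Tv^\ast$ (recall $L^2(F)=\sigma_n^2(F)$). Draw a subset $S\subseteq\{1,\dots,n\}$ uniformly among subsets of size $k$, and set $w$ equal to $v^\ast$ on $S$ and zero elsewhere. Then $w$ is admissible for $\sigma_k^2(F)$, so $\sigma_k^2(F)\geq\Expect_S[w^TFF^Tw]$. Writing $M=FF^T$, $p=k/n$ and $q=k(k-1)/(n(n-1))$, and using $\Prob[i\in S]=p$, $\Prob[i,j\in S]=q$ for $i\neq j$, together with $(v_i^\ast)^2=1$, a direct computation yields
\[
\Expect_S[w^TMw]=q\,L^2(F)+(p-q)\Tr(M).
\]
I would then bound $\Tr(M)=\Tr(FF^T)\geq\lambdamax(FF^T)\geq L^2(F)/n$, the last step because every $v\in\{-1,1\}^n$ has $\|v\|_2^2=n$ and hence $v^TMv\leq n\,\lambdamax(M)$. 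Since $p-q\geq 0$, substituting gives $\sigma_k^2(F)\geq\bigl(q+(p-q)/n\bigr)L^2(F)$, and the elementary identity $q+(p-q)/n=p^2=(k/n)^2$ closes the argument.

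The main obstacle is precisely the correct design of this averaging. Sampling coordinates independently with probability $p$ would produce the cleaner combination $p^2L^2(F)+(p-p^2)\Tr(M)\geq(k/n)^2L^2(F)$, but it does not respect the hard cardinality constraint $\Card(w)\leq k$ needed for $w$ to be feasible for $\sigma_k$. Using exact size-$k$ subsets keeps $w$ admissible, at the cost of replacing $p^2$ by the smaller factor $q$ in the cross terms; the point I would emphasize is that the positive-semidefiniteness of $FF^T$, through the inequality $\Tr(FF^T)\geq L^2(F)/n$, is exactly what compensates for $q<p^2$, with the algebraic identity $q+(p-q)/n=p^2$ making the bound tight.
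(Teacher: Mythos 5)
Your proof is correct. The first two inequalities are handled exactly as in the paper: the RIP applied to the sign vector $u_+-u_-$ of cardinality at most $k$ gives $\sigma_k^2(F)\leq (1+\delta_k)k$, and plugging in Euclidean basis vectors gives $\|F_i\|_2^2\geq 1-\delta_1$. Where you genuinely diverge is the third inequality. The paper disposes of it in one line by invoking $L(F)=\sigma_n(F)$ together with the greedy pruning bound of Srivastav--Wolf (\S\ref{sss:greedy}), which guarantees retention of a fraction $\frac{k(k-1)}{n(n-1)}$ of the weight when pruning from $n$ to $k$ vertices; note that this factor is slightly \emph{smaller} than $(k/n)^2$, and that the cited lemma is stated for nonnegative weight matrices, so the paper's argument is terse at best on this point. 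Your averaging argument over uniformly random size-$k$ subsets is self-contained and sharper: the computation $\Expect_S[w^TMw]=q\,L^2(F)+(p-q)\Tr(M)$ is right, the bound $\Tr(FF^T)\geq\lambdamax(FF^T)\geq L^2(F)/n$ correctly exploits $FF^T\succeq 0$, and the identity $q+(p-q)/n=(k/n)^2$ checks out, so you recover exactly the claimed constant. In short, the paper buys brevity by citing a known combinatorial lemma, while your route buys rigor (no nonnegativity assumption on the entries of $FF^T$) and the exact factor $(k/n)^2$ rather than $\frac{k(k-1)}{n(n-1)}$; your observation that positive semidefiniteness is what compensates for $q<p^2$ is precisely the right thing to emphasize.
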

\begin{proof}
We get
\BEAS
\sigma_k^2(F) & = & \max_{\{(u_+,u_-)\in\{0,1\}^{2n},\ones^Tu\leq k\}}\left\|(u_+-u_-)^TF\right\|^2_2\\
& = & \max_{\{(u_+,u_-)\in\{0,1\}^{2n},\ones^Tu\leq k\}}(u_+-u_-)^TFF^T(u_+-u_-)\\
& \leq & (1+\delta_k)~\max_{\{(u_+,u_-)\in\{0,1\}^{2n},\ones^Tu\leq k\}} \|u_+-u_-\|_2^2\\
& \leq & (1+\delta_k) k
\EEAS
because $F^T$ satisfies the RIP and $\Card(u_+-u_-)\leq k$. Plugging Euclidean basis vectors in the RIP also means $(1-\delta_1) \leq \|F_i\|^2_2$ for $i=1,\ldots,n$. Lemma \ref{lem:sigma-inc} showed that $L(F)=\sigma_n(F)$ and combining this with the lower bound in \citep[Lem.1]{Sriv98} on the performance of the greedy algorithm in \S\ref{sss:greedy} shows that $(k/n)^2 L^2(F)\leq \sigma^2_k(F)$.
\end{proof}
 We now use this last lemma to show the main result of this section, which proves that if a matrix $F$ satisfies RIP then $F^T$ will satisfy the weak recovery condition~(\ref{eq:weak-cond}) in the optimal regime where $k$ is proportional to~$n$. In other words, this result shows that our weak recovery condition is satisfied by optimal matrices, hence is indeed weaker than existing recovery conditions.

\begin{proposition}\label{prop:weak-rip}
Suppose $F^T\in\reals^{m\times n}$ satisfies the restricted isometry property with constant $\delta_k$ with $0<\delta_k<c<1$ at cardinality $k$, where $c$ is an absolute constant. Suppose that $k\leq n$, $k\rightarrow \infty$ as $n\rightarrow \infty$ and $\limsup_{n\rightarrow \infty} k/n=\kappa$. Then $F$ satisfies condition~\eqref{eq:weak-cond} for $n$ large enough with $\alpha_k<1/2$, provided that $f_c(\kappa)<1/2$, where $f_c$ is defined as 
$$
f_c(x)=x\frac{\sqrt{\pi(1+c)}}{\sqrt{1-c}}\sqrt{\left(1+\log\frac{2}{x}\right)}\;, 
$$
and $f_c(0)=0$\;.
\end{proposition}
\begin{proof} When $F^T$ satisfies the RIP, Lemma~\ref{lem:rip-sigma}  \,above shows
\[
\sigma_k(F) \leq \sqrt{k(1+\delta_k)}
\]
and, using  $L(F) \leq (n/k) \sigma_k(F)$ (see Lemma~\ref{lem:rip-sigma}), we then get $L(F)\leq n k^{-1/2}\sqrt{(1+\delta_k)}$. Therefore, 
\[
\sqrt{\frac{2}{\pi}} \sum_{i=1}^n \|F_i\|_2 - \beta L(F)   \geq n\sqrt{\frac{2(1-\delta_1)}{\pi}}   - \beta  n \sqrt{(1+\delta_k)/k}
\]
for any $\beta>0$. We also note that $\delta_1\leq \delta_k<c$ so that
\begin{align*}
\sqrt{\frac{2}{\pi}} \sum_{i=1}^n \|F_i\|_2 - \beta L(F)   &\geq n\sqrt{\frac{2(1-\delta_k)}{\pi}}   - \beta  n \sqrt{(1+\delta_k)/k}\\
&>
n\left(\sqrt{\frac{2(1-c)}{\pi}}   - \beta  \sqrt{(1+c)/k}\right).
\end{align*}
Using the fact that $\sigma_k(F) \leq \sqrt{k(1+c)}$, it is clear that if 
$$
\alpha_k\left[\sqrt{\frac{2}{\pi}} \sum_{i=1}^n \|F_i\|_2 - \beta L(F)\right]\geq \sqrt{k(1+c)} \left[\sqrt{2k\left(1+\log\frac{2n}{k}\right)}+\beta\right],
$$
then Equation~\eqref{eq:weak-cond} holds. Therefore, if 
$$
\alpha_k \, n\left(\sqrt{\frac{2(1-c)}{\pi}}   - \beta  \sqrt{(1+c)/k}\right)\geq \sqrt{k(1+c)} \left[\sqrt{2k\left(1+\log\frac{2n}{k}\right)}+\beta\right],
$$
or equivalently, assuming $k>\pi\beta^2(1+c)/2(1-c)$, if
$$
\alpha_k\geq \frac{k}{n}\frac{\sqrt{1+c}}{\sqrt{\frac{2(1-c)}{\pi}}-\beta \sqrt{\frac{1+c}{k}}}\left[\sqrt{2\left(1+\log \frac{2n}{k}\right)}+\frac{\beta}{\sqrt{k}}\right]\triangleq \Gamma(k,n,c,\beta),
$$
then Equation~\eqref{eq:weak-cond} holds. It is therefore clear that if $\Gamma(k,n,c,\beta)<1/2$, we can find $\alpha_k<1/2$ such that Equation~\eqref{eq:weak-cond} holds. Notice that as $k\rightarrow \infty$, we have 
$$
\Gamma(k,n,c,\beta)\sim \frac{k}{n}\frac{\sqrt{\pi(1+c)}}{\sqrt{1-c}}\sqrt{1+\log \frac{2n}{k}}=f_c(k/n).
$$
Elementary analysis shows that $f_c$ is a continuous increasing function on $[0,1]$. 

Recall now, that by assumption, $k\rightarrow \infty$ as $n\rightarrow \infty$ and $\limsup_{n\rightarrow \infty}\frac{k}{n}=\kappa\geq 0$ with $\kappa$  such that $f_c(\kappa)<1/2$. We therefore conclude - by considering $\limsup_{n\rightarrow \infty} \Gamma(k,n,c,\beta)$ - that when $n$ is large enough, Equation~\eqref{eq:weak-cond} holds with $\alpha_k<1/2$ under our assumptions.
\end{proof}

This last result shows that $F$ satisfies the weak recovery condition in~(\ref{eq:weak-cond}) at cardinalities near~$k$ when~$F^T$ satisfies the RIP at cardinality $k$, in the optimal regime where $k$ is proportional to $n$. 

\section{Bounds on $L(F)$ and $\sigma_k(F)$ using graph partitioning relaxations} \label{s:bounds}
In Section~\ref{s:weak-cond-gauss}, we showed that if the matrix $F\in\reals^{n \times m}$ satisfied the weak recovery condition~(\ref{eq:weak-cond}), which read
\[
\left(\sqrt{2k\log\left(1+\frac{2n}{k}\right)}+\beta \right)\sigma_k(F) \leq  \left(\sqrt{\frac{2}{\pi}} \sum_{i=1}^n \|F_i\|_2 - \beta L(F)  \right) \alpha_k,
\]
for some $\beta>0$, then the recovery condition in (\ref{eq:ineq-rnd}) would be satisfied with probability $1-2e^{-\beta^2/2}$ when $y$ is Gaussian. Testing this weak recovery condition essentially hinged on bounding the Lipschitz constants $\sigma_k(F)$ and $L(F)$. In Section~\ref{s:weak-cond-bnd} we showed that the same quantities allowed us to check the weak recovery condition in a more general model where $y$ is bounded. As we will see below, efficient approximation results on these graph partitioning problems produce relatively tight bounds on both $\sigma_k(F)$ and $L(F)$. In particular, these bounds are tight enough to allow condition~(\ref{eq:ineq-rnd}) to be tested in polynomial time at near-optimal values of the cardinality $k$.

\subsection{Bounding $L(F)$: MaxCut} \label{ss:L-bound}
We have observed in Lemma \ref{lem:cond-right} that the constant $L(F)$ on the right hand side of condition (\ref{eq:weak-cond}) is defined as
\BEQ\label{eq:knp}
L^2(F)=\max_{v\in\{-1,1\}^n}v^TFF^Tv.
\EEQ
This is an instance of a graph partitioning problem similar to MaxCut. \citet{goem95} (when the matrix is nonnegative) and \citet{Nest98a} show that the following relaxation
\BEQ\label{eq:maxcut}
\BA{rll}
L^2(F) \leq L_\mathrm{mxct}^2(F) = & \mbox{max.} & \Tr(XFF^T)\\
&\mbox{s.t.} & \diag(X)=\ones,X\succeq 0,\\
\EA\EEQ
which is a (convex) semidefinite program in the variable $X\in\symm_n$, is tight up to a factor $\pi/2$. This means that $\sqrt{2/\pi}L_\mathrm{mxct}(F)\leq L(F) \leq L_\mathrm{mxct}(F)$. The dual of this last program is written
\[\BA{ll}
\mbox{minimize} & \ones^Tw\\
\mbox{subject to} & FF^T\preceq \diag(w),\\
\EA\]
which is another semidefinite program in the variable $w\in\reals^n$. By weak duality, any feasible point of this last problem gives an upper bound on $L(F)$.

\subsection{Bounding $\sigma_k(F)$: k-Dense-Subgraph} \label{ss:sigma-bound}
On the left hand side of (\ref{eq:weak-cond}), the constant $\sigma_k^2(F)$ is computed as
\BEQ\label{eq:q-knap}
\BA{rll}
\sigma_k^2(F) =& \mbox{max.} & u^TMu\\
&\mbox{s.t.} &\ones^Tu\leq k\\
&& u\in\{0,1\}^{2n},
\EA\EEQ
in the binary variable $u$, where $M\in\symm_{2n}$ is positive semidefinite, with
\BEQ\label{eq:m}
M=
{\scriptsize \left(\BA{rr}
1 & -1\\
-1 & 1\\
\EA\right)}
\otimes FF^T,
\EEQ
here. This is a graph partitioning problem known as k-Dense-Subgraph, which seeks to find a subgraph~$S$ of the graph of $M$, with at most $k$ nodes and maximum edge weight $\sum_{(i,j)\in S} M_{ij}$, see \citet{Kort93,Aror95,Feig01a,Feig01,Han02,Bill06} among others for details. Note that in our application here, $M$ is typically dense and its coefficients can take negative values while most of the references cited above consider graphs with nonnegative (often sparse) weight matrices. The $k$-DenseSubgraph problem can also be seen as an instance of the Quadratic Knapsack problem (see \citet{Lin98,Pisi07} for a general overview). We will see that elementary greedy or random sampling algorithms already produce satisfactory approximations. However, their crudeness means that they are outperformed in practice by linear programming or semidefinite relaxation bounds, and we begin by outlining a few of these relaxations below.

\subsubsection{A Greedy Algorithm.} \label{sss:greedy}
We now recall the greedy elimination procedure described by e.g. \citet{Sriv98}, which extracts a $k$-subgraph out of a larger graph containing the optimal solution. Suppose we are given a weight matrix $M\in\symm_n$, and assume we know an index set $I\in[1,n]$ such that the weight $w(I)=\sum_{i,j\in I} M_{ij}$ of the subgraph with vertices in $I$ is an upper bound on $\sigma_k^2(F)$ of the $k$-Dense-Subgraph problem in (\ref{eq:q-knap}). If $|I|\leq k$, then $I$ is optimal, otherwise we can greedily prune $|I|-k$ vertices from the graph and \citet[Lem.1]{Sriv98} show that the pruned subgraph must have weight at least
\[
\frac{k(k-1)}{|I|(|I|-1)}w(I).
\]
When the weight matrix $M$ is nonnegative, the full graph weight $w([1,n])$ produces an obvious upper bound on $w(I^*)$. The situation is slightly more complex when $M$ has negative coefficients, as in the particular instance considered here in (\ref{eq:sigma}). In Proposition \ref{prop:test}, we show how to produce an upper bound $w(I)$ by solving the MaxCut relaxation (\ref{eq:maxcut}).

\subsubsection{Semidefinite Relaxation.}\label{subsubsec:SemiDefRelax} Many different relaxations have been developed for the $k$-Dense-Subgraph and Quadratic Knapsack problem and we highlight some of them in what follows. Semidefinite relaxations were derived in \cite{Helm00a} to bound $\sigma_k^2(F)$. In particular, the SQK2 relaxation in \cite{Helm00a} yields
\BEQ\label{eq:sqk2}
\BA{rll}
\sigma_k^2(F) \leq & \mbox{max.} & \Tr(MX)\\
& \mbox{s.t.} & \ones^TX\ones \leq k^2\\
& & X - \diag^2(X) \succeq 0,
\EA\EEQ
which is a semidefinite program in the variable $X\in\symm_n$. Note that the constraint $X - \diag^2(X)$ is a Schur complement, hence is convex in $X$. Adaptively adding further constraints as in \cite{Helm00a} can further tighten this relaxation. In particular, adding constraints of the type
\BEQ\label{eq:addc}
\sum_{j=1}^n X_{ij} \leq kX_{ii}
\quad \mbox{or} \quad
\sum_{j=1}^n(X_{jj}-X_{ij}) \leq (1-X_{ii})
\EEQ
for some $i=1,\ldots,n$, sometimes significantly improves tightness. Another simple relaxation formulated in \cite{Helm00a} bounds (\ref{eq:knp}) when $k\geq 2$ by solving
\BEQ\label{eq:sqk3}
\BA{rll}
\sigma_k^2(F) \leq & \mbox{max.} & \Tr(MX)\\
& \mbox{s.t.} & \Tr((\ones\ones^T-\idm)X) \leq k(k-1)\\
& & X - \diag^2(X) \succeq 0,
\EA\EEQ
in the variable $X\in\symm_n$. This last relaxation is tighter than (\ref{eq:sqk2}) but not as tight as its refinements using the additional constraints in (\ref{eq:addc}). Another relaxation detailed in \cite{Feig01} first writes (\ref{eq:q-knap}) as a binary optimization problem over $\{-1,1\}^n$, then bounds it by solving
\BEQ\label{eq:sdp-ksub}
\BA{ll}
\mbox{maximize} & \Tr(M(\ones\ones^T+y\ones^T+\ones y^T+Y))\\
\mbox{subject to} & Y\ones=y(2k-n)\\
& \diag(Y)=\ones,Y\succeq 0,
\EA\EEQ
which is a semidefinite program in the variable $Y\in\symm_n$. We refer the reader to \cite{Helm00a} for details on the tightness and complexity of these various semidefinite relaxations.

Fortunately, even though the $k$-Dense-Subgraph problem is NP-Hard, simple randomized or greedy algorithms reach good approximation ratios (\citet{Aror95} even produced a PTAS in the dense nonnegative case). While many tightness results have been derived on the semidefinite relaxations detailed above (see e.g. \citet{Han02a}), most of them producing approximation ratios of $k/n$ or better, existing results do not apply when {\em the coefficients of $M$ have arbitrary signs}.  Here, we show a similar approximation ratio when the graph weight matrix $M$ is allowed to have some negative coefficients but is positive semidefinite.

\begin{proposition}\label{prop:approx-sdp}
Suppose $M\in\symm_n$ is positive semidefinite. Define
\[
{\mathcal D}_k(M)=\max_{\substack{u\in\{0,1\}^{n}\\\ones^Tu\leq k}} ~ u^TMu,
\]
the relaxation
\BEQ \label{eq:sdp-relax}
\BA{rll}
SDP_k(M)=& \mbox{max.} & \Tr MX\\
& \mbox{s.t.} & 0 \leq X_{ij} \leq 1\\
& & \Tr X=k,\, X \succeq 0,
\EA\EEQ
satisfies, for $n$ large enough and $k\geq n^{1/3}$,
\[
\frac{k}{n} \mu(n,k) \left(\frac{1}{4}\Tr MG +\frac{1}{2\pi} SDP_k(M) \right) \leq {\mathcal D}_k(M) \leq SDP_k(M),
\]
where
\[
\mu(n,k)=\left(1-\frac{2}{k^{1/3}} \right) \left(\frac{1}{1-\frac{2\pi n^2}{k^2}e^{-\frac{n^{1/9}}{3}}}\right)\xrightarrow[n \rightarrow \infty]{}1
\]
and $G_{ij}=\sqrt{X_{ii}X_{jj}}$, $i,j=1,\ldots,n$, so in particular $\Tr MG \geq 0$.
\end{proposition}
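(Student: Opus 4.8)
The plan is to prove the upper bound first, then concentrate on the lower bound, which is where the real work lies. The upper bound $\mathcal{D}_k(M) \leq SDP_k(M)$ is the easy direction: given an optimal $u \in \{0,1\}^n$ with $\ones^T u \leq k$ for the integer program $\mathcal{D}_k(M)$, I would set $X = uu^T$ (possibly padded to make $\Tr X = k$ exactly by inflating along the diagonal, or by noting we can always assume $\ones^T u = k$ at optimum since $M \succeq 0$). This $X$ is feasible for \eqref{eq:sdp-relax} — it satisfies $0 \leq X_{ij} \leq 1$, $X \succeq 0$, and $\Tr X = k$ — and achieves objective $u^T M u$, so the relaxation value dominates. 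This is a standard lifting argument.

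The heart of the proposition is the lower bound, and the natural approach is a randomized rounding scheme applied to the optimal solution $X^\star$ of $SDP_k(M)$. Since $X^\star \succeq 0$ with diagonal entries in $[0,1]$ and $\Tr X^\star = k$, I would interpret $G_{ij} = \sqrt{X^\star_{ii} X^\star_{jj}}$ as the Gram-like correlation envelope and use a hyperplane/Grothendieck-style rounding: draw a Gaussian vector, threshold it to produce a random $0/1$ vector $u$ whose coordinate $i$ is ``on'' with probability related to $X^\star_{ii}$, so that $\Expect[\ones^T u] \approx k$ and $\Expect[u^T M u]$ decomposes into a diagonal contribution governed by $\Tr MG$ and an off-diagonal contribution governed by $SDP_k(M)$. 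The arcsine law (as in the Goemans--Williamson MaxCut analysis, available via \citet{goem95,Nest98a}) is what produces the $1/(2\pi)$ factor on the $SDP_k(M)$ term, while the $1/4$ factor on $\Tr MG$ comes from the probability that two chosen indices are simultaneously active. The factor $k/n$ is the density of the selected set, exactly as in the classical $k/n$ approximation ratios for $k$-Dense-Subgraph.

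The main obstacle — and the reason for the peculiar form of $\mu(n,k)$ and the hypothesis $k \geq n^{1/3}$ — is \emph{controlling the cardinality constraint $\ones^T u \leq k$ after rounding}. Randomized rounding only guarantees $\ones^T u \leq k$ in expectation, so I would need a concentration argument showing the random support size exceeds $k$ only with small probability, then either condition on the good event or prune excess vertices using the greedy elimination bound of \citet[Lem.~1]{Sriv98} from Section~\ref{sss:greedy}. The term $\frac{2\pi n^2}{k^2} e^{-n^{1/9}/3}$ in $\mu(n,k)$ is precisely a tail/union-bound correction of this type, and the factor $(1 - 2 k^{-1/3})$ is the loss incurred by trimming the set down to size exactly $k$; the threshold $k \geq n^{1/3}$ is what makes these corrections vanish as $n \to \infty$. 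The delicate part will be verifying that the conditioning does not destroy the lower bound on the expected objective $\Expect[u^T M u]$ — i.e., that removing the small-probability bad event and pruning only costs the multiplicative factor $\mu(n,k)$ rather than corrupting the $\frac{1}{4}\Tr MG + \frac{1}{2\pi}SDP_k(M)$ structure. The positive semidefiniteness of $M$ is essential here, since it guarantees $\Tr MG \geq 0$ and prevents negative off-diagonal contributions from overwhelming the bound, which is exactly the point where prior analyses assuming nonnegative weight matrices break down.
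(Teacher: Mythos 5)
Your overall strategy is the paper's: round the SDP optimum by combining Feige-style independent Bernoulli sampling (probabilities $q_i = k\sqrt{X_{ii}}/S$ with $S=\sum_i\sqrt{X_{ii}}$) with Goemans--Williamson hyperplane rounding of the correlation matrix $C_{ij}=X_{ij}/\sqrt{X_{ii}X_{jj}}$, so that $\Expect[w_iw_j]=\bigl(\tfrac14+\tfrac{1}{2\pi}\arcsin C_{ij}\bigr)q_iq_j$, then control the cardinality by Chernoff and trim with the greedy bound of \citet[Lem.~1]{Sriv98}. You correctly locate the origin of every term in the bound, including the role of $M\succeq 0$ via $\arcsin(C)\succeq C$ and $M\circ G\succeq 0$. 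One small imprecision: the rounding is not a single thresholding whose ``on'' probability depends on $X_{ii}$, but the \emph{product} $w_i=u_iy_i$ of two independent indicators --- the Gaussian sign $y_i$ (always on with probability $1/2$) carrying the correlation structure, and the Bernoulli $u_i$ carrying the cardinality budget; it is this factorization that yields the clean $\tfrac14+\tfrac{1}{2\pi}\arcsin$ formula.

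The one genuine gap is the step you yourself flag as delicate and then leave open: how to produce a \emph{single} realization that simultaneously has large objective value and cardinality at most $k(1+k^{-1/3})$. Conditioning on the small-cardinality event does not work cleanly, because $w^TMw$ and $\Card(w)$ are dependent and conditioning could bias the expectation downward; this is not merely a technicality to be verified but the place where a new argument is needed. The paper's resolution is a reverse-Markov comparison of probabilities rather than a conditioning: writing $b=\Prob[w^TMw\leq \Expect[w^TMw]/\beta]$, one bounds $1-b\geq(\beta-1)/(\beta\, SDP_n(M)/\Expect[w^TMw]-1)$, uses $SDP_n(M)\leq \frac{2\pi n^2}{k^2}\Expect[w^TMw]$ (via the feasibility of $kY/n$ and $S\leq\sqrt{kn}$), and chooses $\beta$ close to $\bigl(1-\frac{2\pi n^2}{k^2}e^{-k^{1/3}/3}\bigr)^{-1}$ so that $1-b$ strictly exceeds $e^{-k^{1/3}/3}\geq \Prob[\Card(u)>k(1+k^{-1/3})]$. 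The two events then intersect with positive probability, which is exactly what produces the second factor in $\mu(n,k)$; without this comparison your sketch does not close.
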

\begin{proof}
We use a hybrid randomization procedure, mixing the sparse sampling strategy in \cite{Feig97} with the correlation argument in \citet{Nest98a}. Let $X$ be an optimal solution to problem~\eqref{eq:sdp-relax}, w.l.o.g. we can assume $|X_{ii}|>0$, and we define the corresponding (positive semidefinite) correlation matrix $C_{ij}=X_{ij}/\sqrt{X_{ii}X_{jj}},\, i,j=1,\ldots n$ and sample vectors $z\sim{\mathcal N}(0,C)$. For each sample $z$, we define
\[
y_i=
\left\{\BA{l}
1 \quad\mbox{if } z_i \geq 0,\\
0 \quad\mbox{otherwise.}
\EA\right.
\]
As in \cite{Feig97}, we also sample independent variables $u\in\reals^n$ such that
\[
u_i=
\left\{\BA{l}
1 \quad\mbox{with probability } q_i=k\sqrt{X_{ii}}/S,\\
0 \quad\mbox{otherwise.}
\EA\right.
\]
where $S=\sum_{i=1}^n \sqrt{X_{ii}}$. Note that $0\leq q_i\leq 1$ because $0\leq X_{ii} \leq 1$ and $\sum_i X_{ii}=k$. For each sample, we then define $w\in\{0,1\}^n$, with $w_i=u_iy_i,\, i=1,\ldots,n$, so when $i \neq j$
\BEAS
\Expect[w_i w_j] &=& \Prob[z_i \geq 0, z_j \geq 0, u_i=u_j=1]\\
&=& \Prob[z_i\geq 0, z_j \geq 0] \Prob[u_i=1] \Prob[u_j=1]\\
&=& \left(\frac{1}{4} +\frac{1}{2\pi} \arcsin(C_{ij}) \right)\frac{k^2\sqrt{X_{ii}X_{jj}}}{S^2}
\EEAS
and $\Expect[w_i^2] \geq \Prob[z_i\geq 0] \Prob[u_i=1]^2$. If we define $G\in\symm_n$ with $G_{ij}=\sqrt{X_{ii}X_{jj}}$, we conclude that
\[
\Expect[ww^T] \succeq \frac{k^2}{S^2} \left[\frac{1}{4}G +\frac{1}{2\pi}\arcsin (C)\circ G\right].
\]
Because $X,M\succeq 0$ with $\Tr X=k$, we have $S\leq \sqrt{kn}$, and we thus obtain
\BEAS
\Expect[w^TMw] & \geq & \frac{k^2}{S^2} \left(\frac{1}{4}\Tr MG +\frac{1}{2\pi} \Tr (M (\arcsin (C) \circ G)) \right)\\
& \geq & \frac{k}{n} \left(\frac{1}{4}\Tr MG +\frac{1}{2\pi} SDP_k(M) \right)
\EEAS
because $\arcsin(C) \succeq C$ \citep[Corr.\,3.2]{Nest98b}, $\Tr (M (\arcsin (C) \circ G))= \Tr (\arcsin( C )(M \circ G)) $, $C\circ G=X$ and $M,C,G \succeq 0$ so $M \circ G \succeq 0$. Now, let us call $b=\Prob[w^TMw \leq \Expect[w^TMw]/\beta]$ for some $\beta \geq 1$. By construction, because $w^TMw \leq SDP_n(M)$ whenever $w\in\{0,1\}^n$ and
\[
w^TMw \leq \frac{\Expect[w^TMw]}{\beta}~\ones_{\{w^TMw\leq \Expect[w^TMw]/\beta\}}+ SDP_n(M) ~\ones_{\{w^TMw> \Expect[w^TMw]/\beta\}}
\]
we have
\[
\Expect[w^TMw] \leq b \Expect[w^TMw]/\beta + (1-b) SDP_n(M)
\]
so
\[
b \leq 1 - \frac{\beta -1}{\beta SDP_n(M) / \Expect[w^TMw] -1}.
\]
Now, let us call $Y\in\symm_n$ a solution to $SDP_n(M)$; then $k Y/n$ is a feasible point of~\eqref{eq:sdp-relax}, so $SDP_n(M)=\Tr MY\leq \frac{n}{k} \Tr MX$ and the previous paragraph shows
\[
\frac{SDP_n(M)}{\Expect[w^TMw]}= \frac{\Tr MY}{\Expect[w^TMw]} \leq \frac{2 \pi n \Tr MY}{k \Tr MX} \leq \frac{2 \pi n^2}{k^2}\;.
\]
Therefore, for $n$ large enough, setting
\[
\beta \geq \frac{1}{1-\frac{2\pi n^2}{k^2}e^{-k^{1/3}/3}},
\]
ensures
\[
\beta > \frac{1-e^{-k^{1/3}/3}}{1-\frac{SDP_n(M)}{\Expect[w^TMw]}e^{-k^{1/3}/3}}\;.
\]
When the denominator is positive, the previous inequality implies that 
\[
\frac{\beta -1}{\beta SDP_n(M) / \Expect[w^TMw] -1} > e^{-k^{1/3}/3}\;.
\]
Hence, choosing again $n$ large enough to make the denominator positive, we finally have
\[
1-b \geq \frac{\beta -1}{\beta SDP_n(M) / \Expect[w^TMw] -1} > e^{-k^{1/3}/3},
\]
Now, using Chernoff's inequality as in \cite[Lem. 4.1]{Feig97} produces
\[
\Prob\left[\Card(u) - \ones^Tq \geq t \ones^Tq \right]\leq e^{-\frac{t^2 \ones^Tq}{3}},
\]
where $q_i=\Prob[u_i=1]$. We note that here $\ones^Tq=k$ and as in \cite[Th. 4.1]{Feig97}, when $k \geq n^{1/3}$
\[
\Prob\left[\Card(u) \geq k\left(1+k^{-1/3}\right)\right] \leq e^{-k^{1/3}/3}.
\]
This last result, together with the bound on $b$ derived above, shows that
\[
\Prob[w^TMw \geq \Expect[w^TMw]/\beta] = 1-b > e^{-k^{1/3}/3} \geq \Prob\left[\Card(w) \geq k\left(1+k^{-1/3}\right)\right]\;.
\]
Therefore, by sampling enough points~$w$, we can generate a vector $w_0\in\{0,1\}^n$ such that
\[
w_0^TMw_0 \geq \frac{k}{\beta n} \left(\frac{1}{4}\Tr MG +\frac{1}{2\pi} SDP_k(M) \right)
\quad\mbox{and}\quad
\Card(w_0)\leq k\left(1+k^{-1/3}\right)
\]
If we remove no more than $k^{2/3}$ variables from $w_0$ using the backward greedy algorithm described in \citet[Lem.1]{Sriv98} we loose at most a factor
\[
\frac{k(k-1)}{(k+k^{2/3})(k+k^{2/3}-1)}=1-\frac{2}{k^{1/3}}+o\left(\frac{1}{k^{1/3}}\right)
\]
and, from $w_0$, we obtain a point $w_k$ such that
\[
w_k^TAw_k \geq \frac{k}{\beta n} \left(1-\frac{2}{k^{1/3}} \right)\left(\frac{1}{4}\Tr MG +\frac{1}{2\pi} SDP_k(M) \right)
\quad\mbox{and}\quad
\Card(w_k)\leq k,
\]
when $n$ is large enough, which yields the desired result.
\end{proof}

Note that, in the previous result, the condition $k \geq n^{1/3}$ can be replaced by any constraint of the type $k\geq n^\alpha$ where $0<\alpha<1$ with $n^{1/9}$ replaced by $n^{\alpha/3}$.

\section{Complexity} \label{s:algos}
Bounding $L(F)$ and $\sigma_k(F)$ using semidefinite relaxations means solving two maximum eigenvalue minimization problems. Problem~\refp{eq:maxcut}, used for bounding $L(F)$, can be rewritten
\BEQ\label{eq:minmax-maxcut}
\min_{w\in\small{\sreals^n}}~ n\lambdamax(FF^T-\diag(w))-\ones^Tw
\EEQ
while problem~\refp{eq:sqk2} bounding $\sigma_k(F)$ can be written
\BEQ\label{eq:minmax-sigmak}
\min_{\{w,z\in\sreals^n,~y\in\small{\sreals^n}\}} ~(k+1)\lambdamax\left(\bar F + w \bar H + z \bar G + \sum_{i=1}^n y_i \bar E_i \right)-wk(k-1)-z
\EEQ
where
\[
\bar F=\left(\BA{cc}
FF^T & 0\\
0 & 0
\EA\right)
,~
\bar H=\left(\BA{cc}
\ones\ones^T-\idm & 0\\
0 & 0
\EA\right)
,~
\bar G=\left(\BA{cc}
0 & 0\\
0 & 1
\EA\right)
~ \mbox{and} ~
\bar E_i=\left(\BA{cc}
e_ie_i^T & -e_i/2\\
-e_i^T/2 & 0
\EA\right)
\]
where $e_i\in\reals^n$ is the $i^{th}$ Euclidean basis vector. Given a priori bounds on the norm of the solutions, \cite{Nest04a} showed that solving problems~\refp{eq:minmax-sigmak} and~\refp{eq:minmax-maxcut} up to a target precision $\epsilon$ using first-order methods has total complexity growing as
\[
O\left(\frac{n^3\sqrt{\log n}}{\epsilon}\right) \quad\mbox{and}\quad O\left(\frac{n^{3.5}\sqrt{\log n}}{\epsilon}\right)
\]
for problems~\refp{eq:minmax-maxcut} and~\refp{eq:minmax-sigmak} respectively.

\section{Tightness} \label{s:tightness}
We use the convex relaxation result of Proposition~\ref{prop:approx-sdp} to show that if a matrix $F$ satisfies the weak recovery condition~(\ref{eq:weak-cond}) up to cardinality $k^*$, the semidefinite relaxation in~\eqref{eq:sdp-relax} will allow us to certify that $F$ satisfies~(\ref{eq:weak-cond}) at cardinalities very near $k^*$.

\begin{proposition}\label{prop:test}
Suppose the matrix $F\in\reals^{n \times m}$ satisfies the weak recovery condition (\ref{eq:weak-cond}) up to cardinality $k^*=\gamma(n)n$ for some $\gamma(n)\in(0,1)$, $\beta>0$ and $\alpha_{k^*}\in[0,1]$, i.e.
\[
\left(\sqrt{2k^*\log\frac{2n}{k^*}}+\beta \right)\sigma_{k^*}(F) \leq  \left(\sqrt{\frac{2}{\pi}} \sum_{i=1}^n \|F_i\|_2 - \beta L(F)  \right) \alpha_{k^*},
\]
and let $SDP_k(\cdot)$ be defined as in~(\ref{eq:sdp-relax}), we have
\BEQ\label{eq:eff-cond}
\left(\sqrt{2k\log\frac{2n}{k}}+\beta \right) (SDP_{k}(M))^{1/2} \leq  \left(\sqrt{\frac{2}{\pi}} \sum_{i=1}^n \|F_i\|_2 - \beta L(F)  \right) \alpha_{k^*},
\EEQ
for $n$ sufficiently large, when $k\leq \gamma(n)(\log n)^{-1} k^*$, with $M$ defined as in~\eqref{eq:m}.
\end{proposition}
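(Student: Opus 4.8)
Let me understand what Proposition~\ref{prop:test} claims. We assume $F$ satisfies the weak recovery condition at the large cardinality $k^*=\gamma(n)n$, which involves the true (intractable) quantity $\sigma_{k^*}(F)$. We want to conclude that the \emph{tractable} condition~\eqref{eq:eff-cond}, with $\sigma_k(F)$ replaced by its computable upper bound $(SDP_k(M))^{1/2}$, holds at the slightly smaller cardinality $k\leq \gamma(n)(\log n)^{-1}k^*$. The right-hand sides of both conditions are identical (the same $\alpha_{k^*}$ and the same MaxCut/norm terms, neither depending on the left-hand cardinality), so the entire content is to bound the left-hand side of~\eqref{eq:eff-cond} at level $k$ by the left-hand side of the hypothesis at level $k^*$.

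The plan is to chain three inequalities controlling $(SDP_k(M))^{1/2}$ against $\sigma_{k^*}(F)$. First I would invoke Proposition~\ref{prop:approx-sdp} in the direction that turns the relaxation value into something comparable to the true $k$-Dense-Subgraph optimum: that proposition gives $\frac{k}{n}\mu(n,k)\bigl(\frac14\Tr MG+\frac{1}{2\pi}SDP_k(M)\bigr)\leq {\mathcal D}_k(M)\leq SDP_k(M)$. Since $\Tr MG\geq 0$ (stated in Prop.~\ref{prop:approx-sdp}) and $M\succeq 0$, this yields a lower bound of the form $\frac{k}{2\pi n}\mu(n,k)\,SDP_k(M)\leq {\mathcal D}_k(M)$, hence $SDP_k(M)\leq \frac{2\pi n}{k\,\mu(n,k)}{\mathcal D}_k(M)$. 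But here I must be careful: ${\mathcal D}_k(M)$ in Prop.~\ref{prop:approx-sdp} ranges over $u\in\{0,1\}^n$, whereas $\sigma_k^2(F)$ is the $k$-Dense-Subgraph value over $u\in\{0,1\}^{2n}$ for the $2n\times 2n$ matrix $M=\left(\begin{smallmatrix}1&-1\\-1&1\end{smallmatrix}\right)\otimes FF^T$; so the factor $n$ in Prop.~\ref{prop:approx-sdp} should be read as $2n$ for this instance, and ${\mathcal D}_k(M)=\sigma_k^2(F)$. Second, I would use the monotonicity from Lemma~\ref{lem:sigma-inc}: since $k\leq k^*$, we have $\sigma_k(F)\leq\sigma_{k^*}(F)$, so ${\mathcal D}_k(M)=\sigma_k^2(F)\leq\sigma_{k^*}^2(F)$. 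Combining, $(SDP_k(M))^{1/2}\leq \bigl(\tfrac{4\pi n}{k\,\mu(n,k)}\bigr)^{1/2}\sigma_{k^*}(F)$.

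Third, I would absorb the amplification factor $\bigl(\tfrac{4\pi n}{k\,\mu(n,k)}\bigr)^{1/2}$ into the logarithmic gap between $k$ and $k^*$. Multiplying by the prefactor $\bigl(\sqrt{2k\log(2n/k)}+\beta\bigr)$, the target is to show
\[
\left(\sqrt{2k\log\tfrac{2n}{k}}+\beta\right)\left(\tfrac{4\pi n}{k\,\mu(n,k)}\right)^{1/2}\sigma_{k^*}(F)\;\leq\;\left(\sqrt{2k^*\log\tfrac{2n}{k^*}}+\beta\right)\sigma_{k^*}(F),
\]
which after cancelling $\sigma_{k^*}(F)$ reduces to a purely arithmetic comparison of the two prefactors-times-amplification. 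Here I substitute $k^*=\gamma(n)n$ and $k\leq\gamma(n)(\log n)^{-1}k^*=\gamma(n)^2 n(\log n)^{-1}$, note $\mu(n,k)\to 1$, and check that the extra $(n/k)^{1/2}\sim (\log n)^{1/2}/\gamma(n)$ factor on the left is dominated by the gain from replacing $\log(2n/k)$ at level $k\ll k^*$ against the full $\sqrt{k^*}$ on the right; roughly, the left side grows like $\sqrt{\log(2n/k)}\cdot\sqrt{n}$ up to constants while the right grows like $\sqrt{k^*\log(2n/k^*)}=\sqrt{\gamma(n)n\log(2/\gamma(n))}$, and the $(\log n)^{-1}$ shrinkage of $k$ buys the needed slack for $n$ large.

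The main obstacle I anticipate is exactly this last arithmetic step: making the dependence on $\gamma(n)$ and $\log n$ explicit enough that the inequality holds for all large $n$, while keeping track of the $2n$ (rather than $n$) dimension coming from the Kronecker structure of $M$ and verifying that the hypotheses of Prop.~\ref{prop:approx-sdp} ($k\geq n^{1/3}$, $n$ large, $M\succeq 0$) are met at the reduced cardinality $k\leq\gamma(n)^2 n(\log n)^{-1}$. The remaining pieces (monotonicity, nonnegativity of $\Tr MG$, the relaxation bound) are direct citations of the preceding lemmas and require no new estimates.
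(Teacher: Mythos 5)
Your chain of inequalities is applied in the wrong order, and the final arithmetic step --- which you correctly flag as the main obstacle --- in fact fails. You invoke Proposition~\ref{prop:approx-sdp} at the \emph{small} cardinality $k$, obtaining $SDP_k(M)\leq \tfrac{4\pi n}{k\,\mu(n,k)}\,\sigma_k^2(F)$, and then pass to $\sigma_{k^*}(F)$ by Lemma~\ref{lem:sigma-inc}. But the amplification $\bigl(\tfrac{4\pi n}{k}\bigr)^{1/2}$ exactly cancels the $\sqrt{k}$ in the prefactor $\sqrt{2k\log(2n/k)}$: your left-hand side is of order $\sqrt{n\log(2n/k)}$, and since $k\leq\gamma(n)^2n(\log n)^{-1}$ forces $\log(2n/k)\geq\log\log n\to\infty$, it diverges relative to the right-hand side $\sqrt{2k^*\log(2n/k^*)}=\sqrt{2\gamma(n)n\log(2/\gamma(n))}$, whose prefactor $\gamma\log(2/\gamma)$ is bounded above by $2/e$ for all $\gamma\in(0,1)$. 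Shrinking $k$ buys you nothing here --- it only inflates $\log(2n/k)$ --- so the comparison you need, roughly $4\pi\log(2n/k)\leq\gamma(n)\log(2/\gamma(n))$, is false for all large $n$. The intuition that the $(\log n)^{-1}$ gap provides the slack is right, but it can only be harvested if the $\sqrt{n/\cdot}$ amplification is \emph{not} paid at level $k$.

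The paper's proof avoids this by applying Proposition~\ref{prop:approx-sdp} at $k^*$ instead, giving $(SDP_{k^*}(M))^{1/2}\leq\sigma_{k^*}(F)\sqrt{2\pi n/k^*}\,(1+o(1))^{1/2}$ with amplification $\sqrt{2\pi/\gamma(n)}$, which does not grow with $n$; it then uses the monotonicity $SDP_k(M)\leq SDP_{k^*}(M)$ for $k\leq k^*$ (valid because $M\succeq 0$, so one may pad the diagonal of a feasible $X$ to raise its trace without decreasing the objective) in place of your $\sigma_k\leq\sigma_{k^*}$. The decay then comes entirely from the ratio of prefactors, $\bigl(\sqrt{2k\log(2n/k)}+\beta\bigr)/\bigl(\sqrt{2k^*\log(2n/k^*)}+\beta\bigr)=O\bigl(\sqrt{\log\log n/\log n}\bigr)$, which tends to zero and absorbs the constant amplification. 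Your observation that the dimension of $M$ is $2n$ rather than $n$ (so the approximation ratio should read $k/(2n)$) is a fair point, but it only affects constants and does not rescue your ordering of the steps.
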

\begin{proof}
Applying the result of Proposition~\ref{prop:approx-sdp} at cardinality $k^*$ shows
\[
(SDP_{k^*}(M))^{1/2} \leq \sigma_{k^*}(F) \sqrt{\frac{2\pi n}{k^*}} \left(1+\frac{o(1)}{{k^*}^{1/3}} \right)^{1/2} \;.
\]
Using $SDP_{k}(M)\leq SDP_{k^*}(M)$, with $k\leq \gamma(n)(\log n)^{-1} k^*$ showing
\[
\frac{\left(\sqrt{2k\log\frac{2n}{k}}+\beta \right)}{\left(\sqrt{2k^*\log\frac{2n}{k^*}}+\beta \right)}\sqrt{\frac{2\pi n}{k^*}} \left(1+\frac{o(1)}{{k^*}^{1/3}} \right)^{1/2}  = o(1)
\]
when $n\rightarrow\infty$, yields the desired result.
\end{proof}

\section{Numerical Results}\label{s:numres}

We start by studying the distribution of the residual error $x^\mathrm{lp}-e$ when $e$ is a random sparse signal. We sample a thousand vectors $e\in\reals^{100}$ with 15 nonzero i.i.d. uniform coefficients. Our (fixed) design matrix $A\in\reals^{m\times n}$ is Gaussian or Bernoulli with $m=30$. We produce a vector of observations $Ae$ and solve the $\ell_1$ reconstruction problem in~\refp{eq:min-ell1} and record the value of $x^\mathrm{lp}-e$ projected along a fixed (randomly chosen) direction $v$. The histogram of these values is plotted in Figure~\ref{fig:projerr}.

\begin{figure}[ht]
\begin{center}
\begin{tabular}{cc}
\psfrag{errpoj}[t][b]{Projected Error (Gaussian)}
\includegraphics[width=.45\textwidth]{./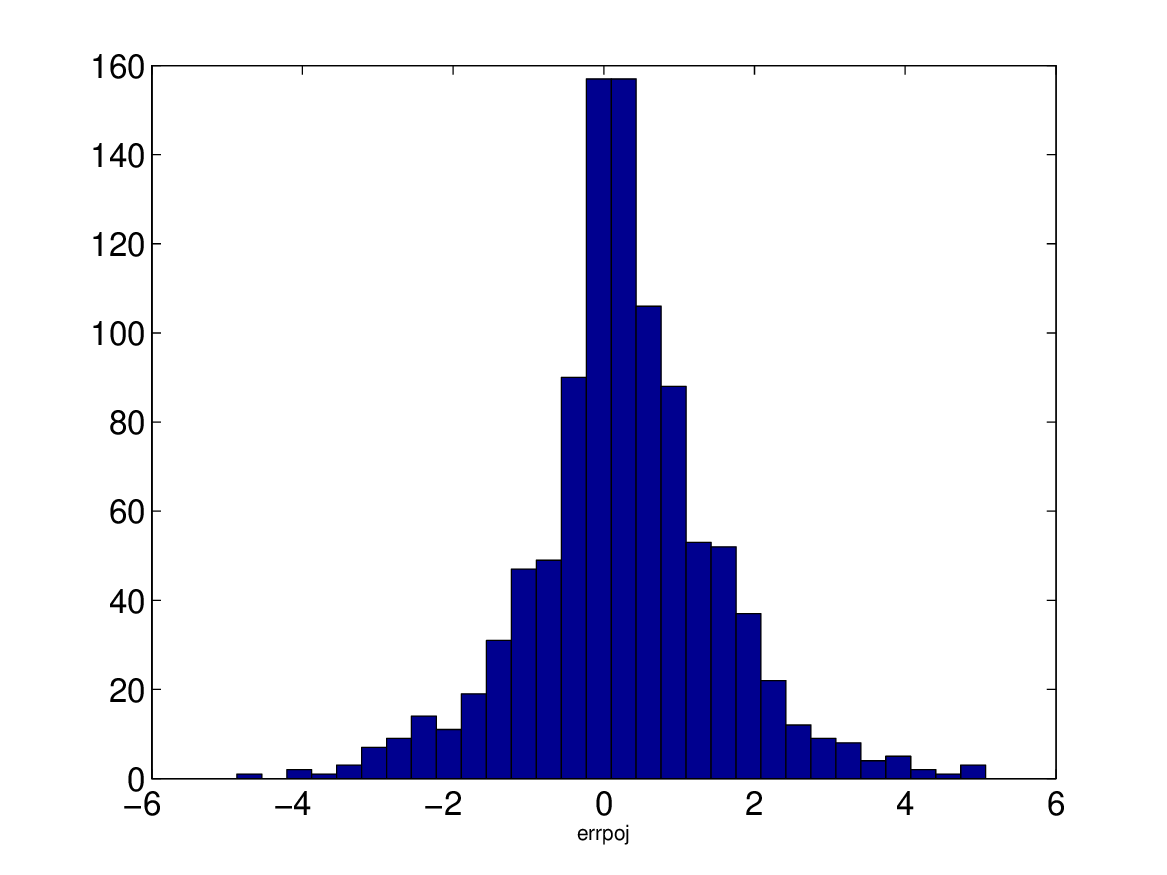}&
\psfrag{errpoj}[t][b]{Projected Error (Bernoulli)}
\includegraphics[width=.45\textwidth]{./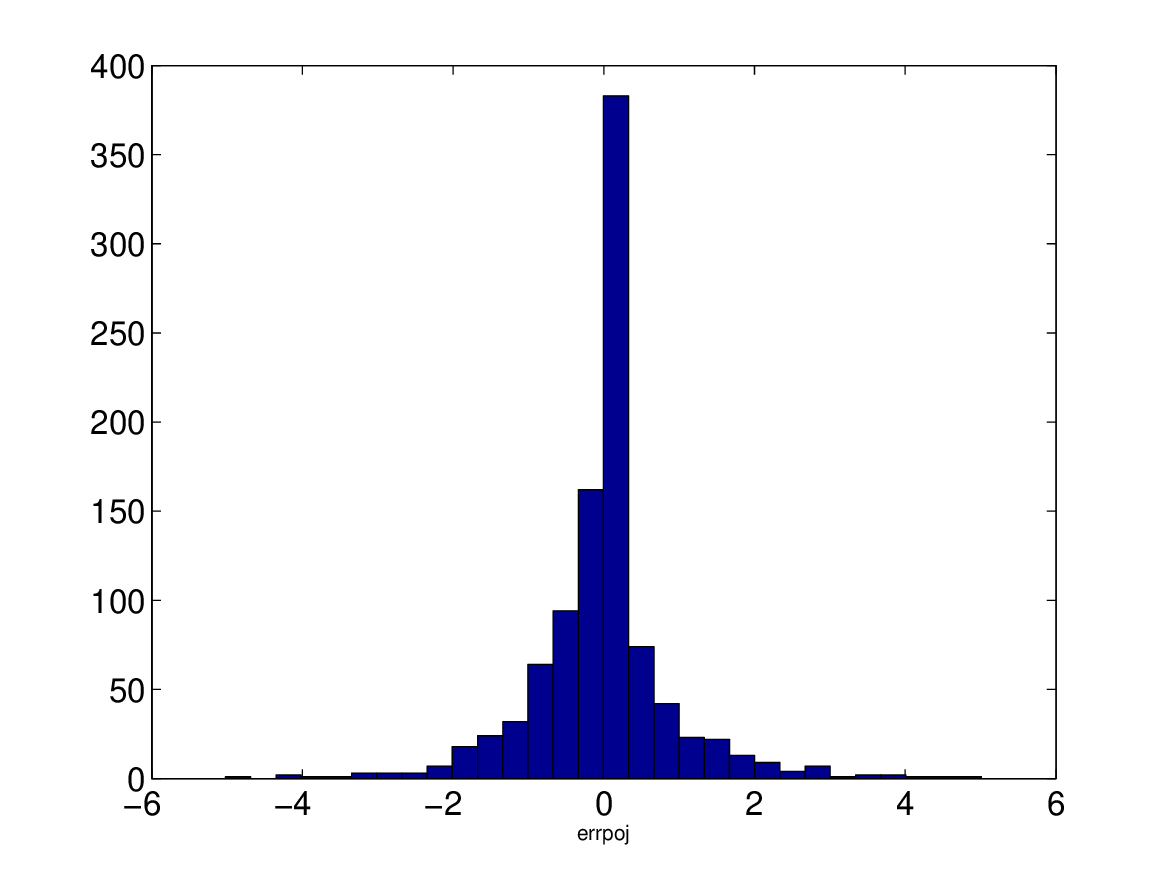}
\end{tabular}
\caption{Projected reconstruction error $v^T(x^\mathrm{lp}-e)$, along a fixed (randomly chosen) direction $v$, using a single Gaussian (left) or Bernoulli (right) design matrix with $p=100$, $m=30$ and a thousand samples of a random sparse signal $e\in\reals^{100}$ with 15 i.i.d. uniform coefficients.\label{fig:projerr}}
\end{center}
\end{figure}

On a random Gaussian matrix with $n=40$ and $m/n=1/2$, we recall in Table~\ref{tab:nap} the recovery threshold $k/m$ certified by the semidefinite relaxation (SDP) detailed in \citep{dAsp08a} and the linear programming (LP) relaxation in \citep{Judi08}, strong and weak recovery thresholds from the asymptotic results in \cite{Dono08}.

\begin{table}[hb]
\begin{center}
\begin{tabular}{|c|c|c|c|}
\hline
SDP & LP & Strong D\&T & Weak D\&T\\
\hline 
0.1 & 0.1 & 0.1 & 0.5\\
\hline
\end{tabular}
\vskip 1ex
\caption{Perfect recovery threshold $k/m$ computed using the semidefinite relaxation (SDP) detailed in \citep{dAsp08a}, the linear programming (LP) relaxation in \citep{Judi08} on a sample Gaussian matrix. We also recall the asymptotic strong and weak recovery thresholds from \cite{Dono08}. \label{tab:nap}}
\end{center}
\end{table}

We then sample Gaussian and Bernoulli matrices of increasing dimensions $n\times n/2$ and plot the mean values of the relaxation bounds on $L(F)$ (blue circles), $\sigma_k(F)$ (brown diamonds) together with $\sum_{i=1}^p \|F_i\|_2$ (black squares). These quantities are plotted in loglog scale in Figure~\ref{fig:LandF}. As expected, the  norm grows as~$n$ while both $\sigma_k(F)$ and $L(F)$ grow as $\sqrt{n}$. In Figure~\ref{fig:Gaussian-scale} we plot the empirical (brown squares) versus predicted (blue circles) probability of recovering signals~$e$, where $F\in\reals^{n\times m}$ is a Gaussian with $n=300$ and $m=n/2$, for various values of the relative cardinality~$k/m$. The empirical probability was obtained by solving~\refp{eq:min-ell1} over one hundred random sparse signal $e\in\reals^{100}$ with 15 i.i.d. uniform coefficients. The predicted probability is obtained by computing $\beta$ from condition~\refp{eq:weak-cond} after bounding $L(F)$ and $\sigma_k(F)$ using the convex relaxations detailed in Section~\ref{s:bounds}.

\begin{figure}[ht]
\begin{center}
\begin{tabular}{cc}
\psfrag{n}[t][b]{Leading dimension $n$}
\includegraphics[width=.45\textwidth,height=.35\textwidth]{./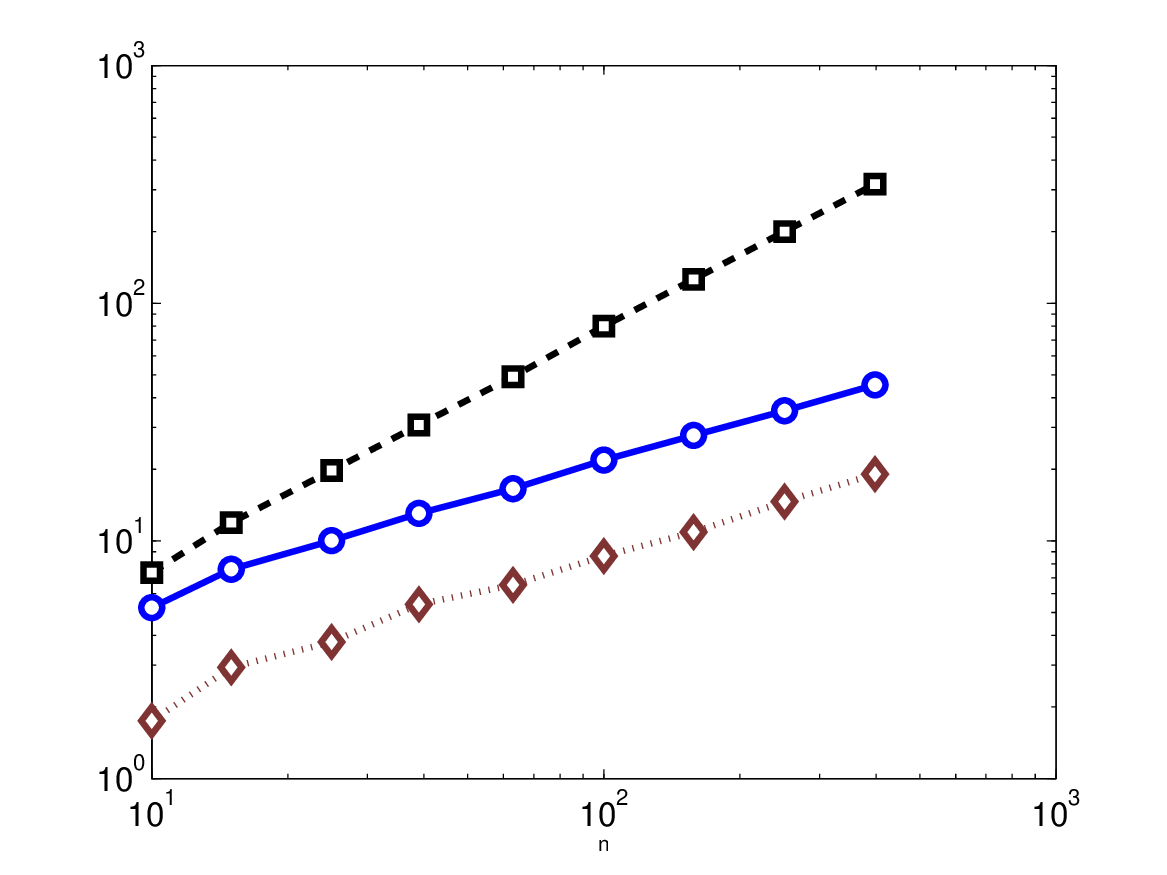}&
\psfrag{n}[t][b]{Leading dimension $n$}
\includegraphics[width=.45\textwidth,height=.35\textwidth]{./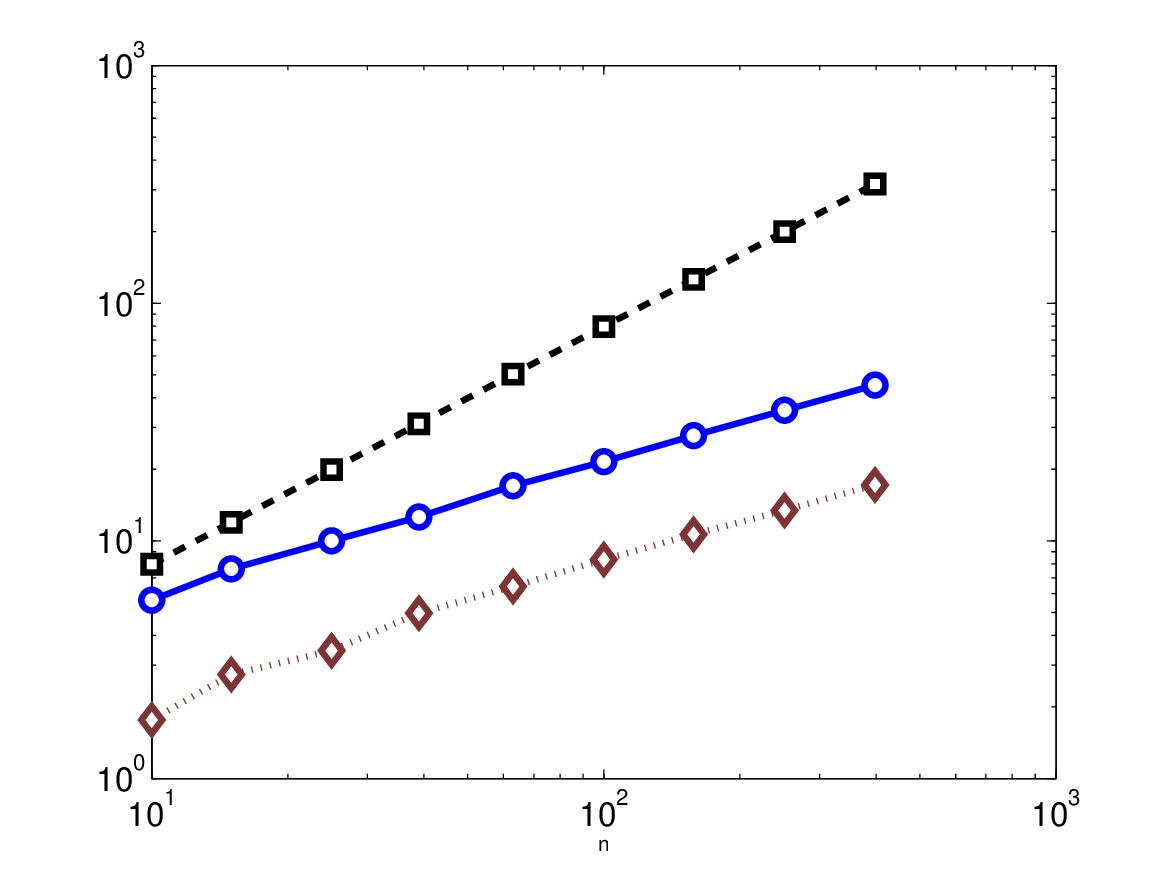}
\end{tabular}
\caption{{\em Left:} Loglog plot of mean values of $L(F)$ (blue circles), $\sigma_k(F)$ (brown diamonds) and $\sum_{i=1}^n \|F_i\|_2$ (black squares) for Gaussian (left) or Bernoulli (right) matrices of increasing dimensions $n$, with $m=n/2$. 
\label{fig:LandF}}
\end{center}
\end{figure}

\begin{figure}[ht]
\begin{center}
\begin{tabular}{cc}
\psfrag{kmratio}[t][b]{Relative Cardinality $k/m$}
\psfrag{probrec}[b][t]{Probability of recovering $e$}
\includegraphics[width=.45\textwidth,height=.36\textwidth]{./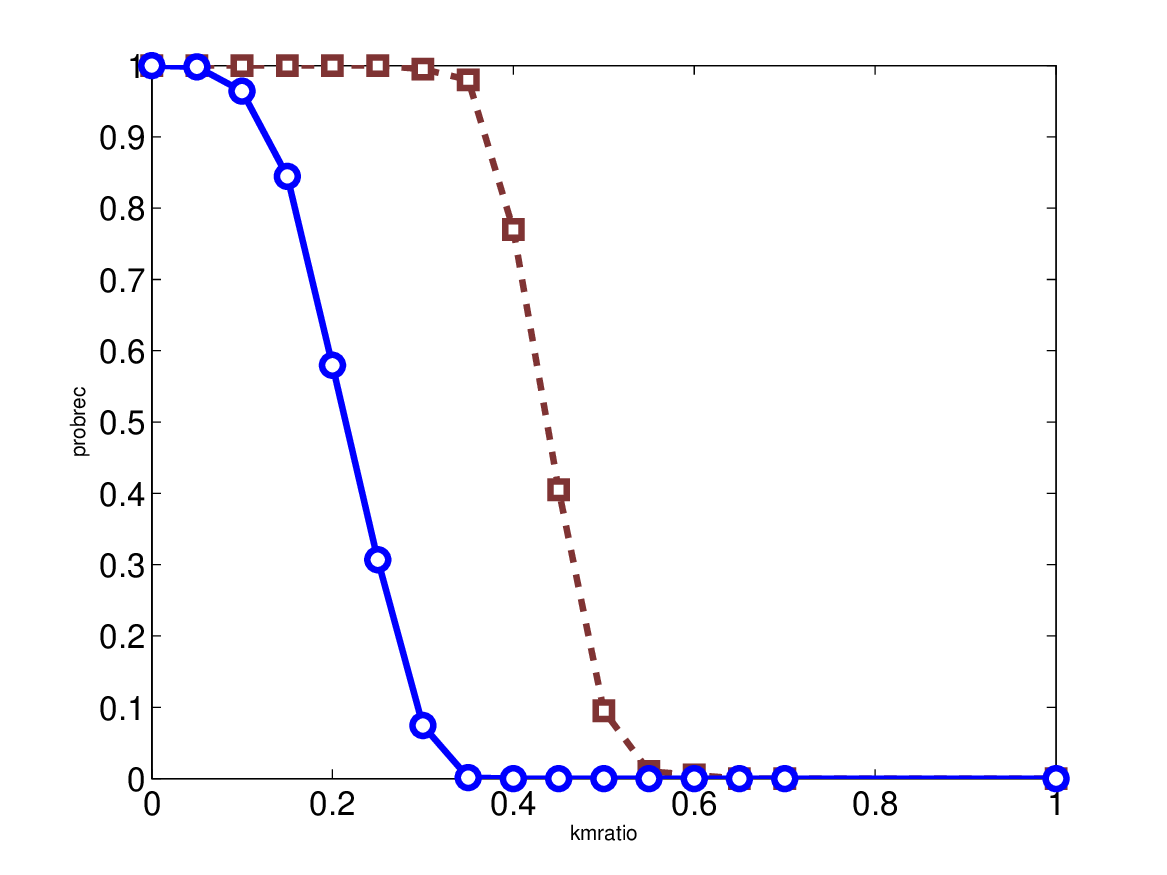}&
\psfrag{kmratio}[t][b]{Relative Cardinality $k/m$}
\psfrag{probrec}[b][t]{Probability of recovering $e$}
\includegraphics[width=.45\textwidth,height=.36\textwidth]{./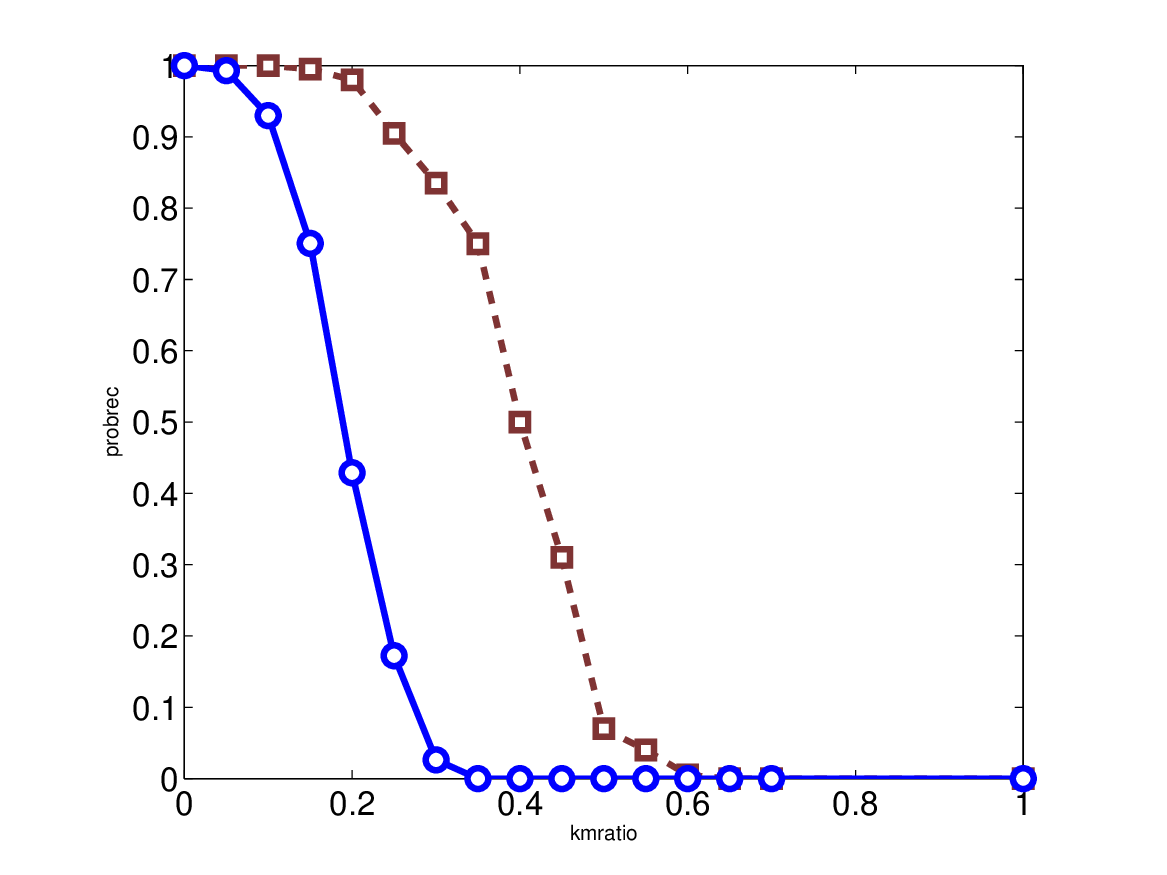}
\end{tabular}
\caption{Empirical (brown squares) versus predicted (blue circles) probability of recovering the true signal~$e$, where $F\in\reals^{n\times m}$ is Gaussian (left) or Bernoulli (right) with $m=n/2$, for various values of the relative cardinality~$k/m$.
\label{fig:Gaussian-scale}}
\end{center}
\end{figure}

\section*{Acknowledgments} The first author would like to acknowledge partial support from NSF grants SES-0835550 (CDI), CMMI-0844795 (CAREER), CMMI-0968842, a starting grant for the European Research Council (project SIPA), a Peek junior faculty fellowship, a Howard B. Wentz Jr. award and a gift from Google. The work of the second author is partially supported by an Alfred P. Sloan research fellowship and NSF grants DMS-0605169 and DMS-0847647 (CAREER).

\section{Appendix}\label{s:appendix}
Gaussian matrices are known to satisfy the recovery condition (\ref{eq:ineq-a}) with high probability for near-optimal values of $k$ hence obviously satisfy (\ref{eq:ineq-rnd}). Here we directly verify that these matrices satisfy condition~(\ref{eq:weak-cond}) w.h.p. without using RIP. Concentration inequalities have been used in \cite{Bara07} to derive a simple proof that some classes of random matrices satisfy RIP, we use similar techniques on the weak recovery property~(\ref{eq:weak-cond}) here.

We start by bounding the fluctuations of the right hand side of inequality (\ref{eq:weak-cond}) when $F\in\reals^{n \times m}$ is a Gaussian random matrix with $F_{ij}\sim{\mathcal N}(0,1/m)$.
\begin{lemma}\label{eq:chi-expect}
Let $F\in\reals^{n \times m}$ with i.i.d $F_{ij}\sim{\mathcal N}(0,1/m)$,
\[
\sum_{i=1}^n\Expect\left[\|F_i\|_2\right]=n(1+O(m^{-1}))
\]
as $m\rightarrow \infty$.
\end{lemma}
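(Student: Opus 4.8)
The plan is to reduce the sum to a single-row computation and evaluate $\Expect[\|F_i\|_2]$ exactly via the chi distribution, then apply a Gamma-function asymptotic. Since the rows $F_i$ are identically distributed, it suffices to estimate $\Expect[\|F_i\|_2]$ for one row and multiply by $n$.

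First I would write $F_{ij}=Z_{ij}/\sqrt{m}$ with $Z_{ij}\sim{\mathcal N}(0,1)$ i.i.d., so that $\|F_i\|_2^2=\frac{1}{m}\sum_{j=1}^m Z_{ij}^2$ is $\frac{1}{m}$ times a chi-squared variable with $m$ degrees of freedom. Consequently $\sqrt{m}\,\|F_i\|_2$ follows the chi distribution $\chi_m$, whose mean is given by the classical formula $\Expect[\chi_m]=\sqrt{2}\,\Gamma((m+1)/2)/\Gamma(m/2)$. This produces the exact identity
\[
\Expect[\|F_i\|_2]=\sqrt{\frac{2}{m}}\,\frac{\Gamma((m+1)/2)}{\Gamma(m/2)}.
\]

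Next I would invoke the standard asymptotic expansion for ratios of Gamma functions, $\Gamma(x+1/2)/\Gamma(x)=\sqrt{x}\,(1-1/(8x)+O(x^{-2}))$, applied at $x=m/2$, which gives $\Gamma((m+1)/2)/\Gamma(m/2)=\sqrt{m/2}\,(1+O(m^{-1}))$. Substituting back, the prefactors $\sqrt{2/m}$ and $\sqrt{m/2}$ cancel exactly, so $\Expect[\|F_i\|_2]=1+O(m^{-1})$ as $m\rightarrow\infty$. Since this estimate is identical for every row, summing over $i=1,\ldots,n$ yields $\sum_{i=1}^n\Expect[\|F_i\|_2]=n(1+O(m^{-1}))$, as claimed.

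The only nontrivial ingredient is the Gamma-ratio asymptotic, which is a routine one-line consequence of Stirling's formula, so there is no genuine obstacle here. The two points requiring mild care are recognizing the scaled chi distribution and verifying that the $O(m^{-1})$ error is uniform across rows (which is immediate, as all rows share the same law), so that factoring it out of the $n$-term sum is legitimate.
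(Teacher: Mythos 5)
Your proposal is correct and follows essentially the same route as the paper: identify $\sqrt{m}\,\|F_i\|_2$ as a $\chi_m$ variable with mean $\sqrt{2}\,\Gamma((m+1)/2)/\Gamma(m/2)$, then apply a Stirling-type asymptotic for the Gamma ratio to conclude $\Expect[\|F_i\|_2]=1+O(m^{-1})$ and sum over the $n$ identically distributed rows. The only cosmetic difference is that you invoke the standard Gamma-ratio expansion directly, whereas the paper derives it by hand from Stirling's formula.
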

\begin{proof}
In this setting, each $\sqrt m\|F_i\|_2$ is $\chi$ distributed with $m$ degrees of freedom, so
\[
\Expect[\|F_i\|_2]=\sqrt{\frac{2}{m}}\frac{\Gamma((m+1)/2)}{\Gamma(m/2)}, \quad i=1,\ldots,n.
\]
Using Stirling's formula \citep[\S6.1.37]{Abra70a}, we get
\BEAS
\frac{\Gamma((m+1)/2)}{\Gamma(m/2)} & = &
\frac{\exp{\left(\frac{m+1}{2}\log\left(\frac{m+1}{2}\right)-\frac{m}{2}\log\left(\frac{m}{2}\right)\right)}}{\sqrt{e(1+1/m)}}+ O(m^{-1/2})\\
& = & \sqrt{\frac{m+1}{2}} + O(m^{-1/2})\\
\EEAS
as $m\rightarrow \infty$, which is the desired result.
\end{proof}

We now use concentration inequalities to bound $\sum_{i=1}^n \|F_i\|_2$ in condition (\ref{eq:weak-cond}) with high probability when $F_{ij}\sim{\mathcal N}(0,1/m)$.
\begin{lemma}\label{eq:cnct-euc}
Let $F\in\reals^{n \times m}$ with i.i.d $F_{ij}\sim{\mathcal N}(0,1/m)$,
\[
\Prob\left[\sum_{i=1}^n \|F_i\|_2  \leq \sum_{i=1}^n \Expect\left[\|F_i\|_2\right] - x  \right] \leq e^{-\frac{mx^2}{2n}}
\]
\end{lemma}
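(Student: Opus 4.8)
The plan is to recognize $\sum_{i=1}^n \|F_i\|_2$ as a Lipschitz function of the underlying standard Gaussian entries and then invoke the Gaussian concentration inequality, exactly as in the proofs of Lemmas~\ref{lem:cond-left} and~\ref{lem:cond-right}. First I would write $F_{ij}=g_{ij}/\sqrt{m}$ with $g=(g_{ij})\sim{\mathcal N}(0,\idm_{nm})$ a standard Gaussian vector, so that the quantity of interest becomes
\[
\psi(g)=\sum_{i=1}^n \|F_i\|_2=\frac{1}{\sqrt{m}}\sum_{i=1}^n \|g_i\|_2,
\]
where $g_i\in\reals^m$ denotes the $i$-th row. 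This recasts the left-hand tail as a lower-deviation statement for a Lipschitz function of a standard Gaussian vector, with $\Expect[\psi(g)]=\sum_{i=1}^n \Expect[\|F_i\|_2]$ as required.

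The core step is bounding the Lipschitz constant of $\psi$ with respect to the Euclidean norm on $\reals^{nm}$. Each map $g_i\mapsto\|g_i\|_2$ is $1$-Lipschitz since it is a norm, and the $n$ rows depend on disjoint blocks of coordinates, so where the gradient exists it is the concatenation of the unit vectors $g_i/\|g_i\|_2$; its squared Euclidean norm is therefore $\sum_{i=1}^n 1=n$, giving Lipschitz constant $\sqrt{n}$ for the unscaled sum. The prefactor $1/\sqrt{m}$ then yields a Lipschitz constant $L=\sqrt{n/m}$ for $\psi$. The non-differentiability of the norm at the origin is harmless, since a sum of norms composed with coordinate projections is globally Lipschitz by the triangle inequality, so the bound on the gradient norm holds almost everywhere and controls the global Lipschitz constant.

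Finally I would apply the Gaussian concentration inequality for $L$-Lipschitz functions, as in \cite[Th.~3.12]{Mass07}, which gives
\[
\Prob\left[\psi(g)\leq\Expect[\psi(g)]-x\right]\leq e^{-x^2/(2L^2)}
\]
with $L=\sqrt{n/m}$, so that $x^2/(2L^2)=mx^2/(2n)$, matching the claimed bound exactly. The only point requiring care is the bookkeeping on the Lipschitz constant, in particular tracking the $1/\sqrt{m}$ scaling and using the block structure of the rows to \emph{sum} the squared gradient contributions to $n$ rather than accumulate them in a worse way; this is routine rather than a genuine obstacle, and the argument is deliberately parallel to the concentration estimates already established for $\sigma_k(F)$ and $L(F)$.
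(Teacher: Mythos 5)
Your proof is correct and follows essentially the same route as the paper: both identify $\sum_{i=1}^n\|F_i\|_2$ as a $\sqrt{n/m}$-Lipschitz function of the $nm$ standard Gaussian entries and invoke the one-sided Gaussian concentration inequality. The only cosmetic difference is that the paper bounds the Lipschitz constant via the triangle inequality and Cauchy--Schwarz ($\sum_i\|U_i-V_i\|_2\leq\sqrt{n}\,\|U-V\|_F$) rather than your almost-everywhere gradient computation, and both yield the same constant.
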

\begin{proof}
For any $U,V\in\reals^{m\times n}$, we have
\BEAS
\sum_{i=1}^n \|U_i\|_2-\|V_i\|_2 & \leq & \sum_{i=1}^n \|U_i-V_i\|_2\\
& \leq & \sqrt{n} \|U-V\|_F
\EEAS
so $\sum_{i=1}^n \|F_i\|_2$ is a $\sqrt{n/m}$-Lipschitz function (w.r.t. the Euclidean norm)
of $nm$ i.i.d. Gaussian variables $F_{ij}/\sqrt{m} \sim{\mathcal N}(0,1)$ and \cite[Th. 3.4]{Mass07} yields the desired result.
\end{proof}

We now turn to the left-hand side of inequality (\ref{eq:weak-cond}) and produce inequalities on $\sigma_k(F)$, using again the fact that it is a Lipschitz function of $F$.
\begin{lemma}\label{eq:cnct-sigma}
Let $F\in\reals^{n \times m}$ with i.i.d. $F_{ij}\sim{\mathcal N}(0,1/m)$,
\[
\Prob\left[ \sigma_k(F) \geq \Expect[\sigma_k(F)] + x \right] \leq e^{-\frac{mx^2}{2k}}
\]
\end{lemma}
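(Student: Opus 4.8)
The plan is to reproduce the structure of the proof of Lemma~\ref{eq:cnct-euc}: exhibit $\sigma_k$ as a Lipschitz function of the $nm$ underlying standard Gaussian variables $G_{ij}=\sqrt m\,F_{ij}\sim{\mathcal N}(0,1)$, pin down its Lipschitz constant, and apply the Gaussian concentration inequality \citep[Th.~3.4]{Mass07}. The only quantity I need to control is the Lipschitz constant of the map $F\mapsto\sigma_k(F)$ with respect to the Frobenius norm.

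First I would use the variational description established in the proof of Lemma~\ref{lem:cond-left}, namely
\[
\sigma_k(F)=\max_{v\in{\mathcal W}_k}\|F^Tv\|_2,
\qquad
{\mathcal W}_k=\{v\in\{-1,0,1\}^n:\ \|v\|_0\leq k\},
\]
obtained by setting $v=u_+-u_-$; the set ${\mathcal W}_k$ is finite and each of its elements satisfies $\|v\|_2\leq\sqrt k$. For fixed $v$, the map $F\mapsto\|F^Tv\|_2$ is Lipschitz, since for $U,V\in\reals^{n\times m}$ the reverse triangle inequality together with submultiplicativity of the spectral norm gives
\[
\bigl|\,\|U^Tv\|_2-\|V^Tv\|_2\,\bigr|
\leq \|(U-V)^Tv\|_2
\leq \|U-V\|_2\,\|v\|_2
\leq \sqrt k\,\|U-V\|_F .
\]
Because a pointwise maximum over a finite family sharing a common Lipschitz constant is itself Lipschitz with that same constant, $\sigma_k$ is $\sqrt k$-Lipschitz on $\reals^{n\times m}$ for the Frobenius norm.

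Finally I would pass to the standard Gaussian matrix. Writing $G=\sqrt m\,F$ so that the $G_{ij}\sim{\mathcal N}(0,1)$ are i.i.d., homogeneity of $\sigma_k$ gives $\sigma_k(F)=m^{-1/2}\sigma_k(G)$, so viewed as a function of the $nm$ standard Gaussians $G_{ij}$ the quantity $\sigma_k(F)$ is $\sqrt{k/m}$-Lipschitz. Massart's concentration inequality then yields
\[
\Prob\bigl[\sigma_k(F)\geq\Expect[\sigma_k(F)]+x\bigr]\leq e^{-x^2/(2k/m)}=e^{-mx^2/(2k)},
\]
which is the claim. I do not anticipate a genuine obstacle; the only point requiring care is the bookkeeping of norms, namely extracting the factor $\sqrt k$ from $\|v\|_2\leq\sqrt k$ (rather than a cruder bound such as $\sqrt n$) and correctly tracking the $m^{-1/2}$ rescaling, so that the Lipschitz constant comes out to exactly $\sqrt{k/m}$ and hence the sharp exponent $mx^2/(2k)$.
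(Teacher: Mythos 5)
Your proposal is correct and follows essentially the same route as the paper: both arguments identify $\sigma_k$ as a finite maximum of maps $F\mapsto\|(u_+-u_-)^TF\|_2$, each Lipschitz in the Frobenius norm with constant $\|u_+-u_-\|_2\leq\sqrt k$, conclude that the maximum inherits this constant, rescale by $m^{-1/2}$ to pass to standard Gaussians, and invoke Gaussian concentration for Lipschitz functions. The only cosmetic difference is that the paper spells out the $\ell_\infty$-Lipschitz property of the max explicitly, while you invoke it as a known fact.
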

\begin{proof}
We first note that the $\max$ is 1-Lispchitz with respect to the $\ell_\infty$ norm on $\reals^n$. Indeed, if $a,b\in\reals^n$
\[
|\max_i a_i - \max_j b_j|\leq \max_{i}|a_i-b_i|,
\]
because
\[
a_i-\max_j b_j\leq a_i-b_i \leq |a_i-b_i|\leq \max_i |a_i-b_i|,\quad i=1,\ldots,n.
\]
Hence, $\max_i a_i-\max_j b_j\leq \max_i |a_i-b_i|$. The two sequences play symmetric roles so we also have $|\max_j b_j -\max_i a_i|\leq \max_k |a_k-b_k|$. Now our aim is to show that $F\rightarrow \sigma_k(F)$ is a Lipschitz function of $F$ with respect to the Euclidian norm. The argument we just gave shows that if $F$ and $G$ are two matrices,
\[
\left|\sigma_k(F)-\sigma_k(G)\right|\leq \max_{\{(u_+,u_-)\in\{0,1\}^{2n},\ones^Tu\leq k\}} \left|\left\|(u_+-u_-)^TF\right\|_2-\left\|(u_+-u_-)^TG\right\|_2\right|,
\]
because $\sigma_k(F)$ and $\sigma_k(G)$ are maxima of finite sequences. We now have
\BEAS
\left|\left\|(u_+-u_-)^TF\right\|_2-\left\|(u_+-u_-)^TG\right\|_2\right| & \leq & \left\|(u_+-u_-)^T(F-G)\right\|_2 \\
& \leq & \|(F-G)\|_2 \left\|(u_+-u_-)^T\right\|_2 \\
& \leq  & \|F-G\|_F \left\|(u_+-u_-)^T\right\|_2
\EEAS
which shows that
\[
\sigma_k(F)=\max_{\{(u_+,u_-)\in\{0,1\}^{2n},\ones^Tu\leq k\}}\left\|(u_+-u_-)^TF\right\|_2
\]
is a Lipschitz function of the entries of $F$ (with respect to Euclidian norm). Now when the entries of $F$ are i.i.d ${\mathcal N}(0,1/m)$, $\sigma_k(F)$ is a Lipschitz function of standard Gaussian variables with Lipschitz constant
\[
\max_{\{(u_+,u_-)\in\{0,1\}^{2n},\ones^Tu\leq k\}}\frac{\left\|u_+-u_-\right\|_2}{\sqrt{m}} \leq \sqrt{\frac{k}{m}},
\]
and \cite[Th. 3.12]{Mass07} yields the desired result.
\end{proof}

Next, to bound $\Expect[\sigma_k(F)]$, we first show a bound on the supremum of an arbitrary number of $\chi$ distributed random variables.
\begin{lemma}\label{lem:chi-mean}
Let $\{y_i\}_{i\in T}$ be $\chi$ distributed variables with $m$ degrees of freedom, then
\[
\Expect[\sup_{i\in T} y_i] \leq \sqrt{2 \log |T|}+\frac{\sqrt{2}~\Gamma((m+1)/2)}{\Gamma(m/2)}\leq \sqrt{2 \log |T|}+\sqrt{m}\;.
\]
\end{lemma}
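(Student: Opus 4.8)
The plan is to realize each $\chi$-distributed variable as the Euclidean norm of a standard Gaussian vector and then combine Gaussian concentration with a subgaussian maximal inequality. Concretely, I would write each marginal as $y_i \stackrel{d}{=} \|g\|_2$ where $g\sim{\mathcal N}(0,\idm_m)$ is a standard Gaussian vector in $\reals^m$. Since the Euclidean norm is $1$-Lipschitz, $\|g\|_2$ is a $1$-Lipschitz function of a standard Gaussian vector, so Gaussian concentration \citep[Th.\,3.12]{Mass07} applies with Lipschitz constant exactly $1$ and shows that the centered variable $y_i-\Expect[y_i]$ is subgaussian with variance proxy $1$, i.e. $\log\Expect[e^{\lambda(y_i-\Expect[y_i])}]\leq \lambda^2/2$ for all $\lambda$. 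Crucially this is a statement about the common marginal law of each $y_i$, so it holds whatever the joint dependence of the family $\{y_i\}_{i\in T}$.

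Next I would use the fact that all the $y_i$ share the same mean, since each is $\chi$ with the same $m$ degrees of freedom: $\Expect[y_i]=\mu$ for every $i$, with the classical value $\mu=\sqrt{2}\,\Gamma((m+1)/2)/\Gamma(m/2)$ obtained by integrating the $\chi_m$ density. Writing $\sup_{i\in T} y_i=\mu+\sup_{i\in T}(y_i-\mu)$, the problem reduces to bounding the expected supremum of the centered subgaussian variables. For this I would invoke the standard maximal inequality \citep[Lem.\,2.3]{Mass07} — the same tool already used in the proof of Lemma~\ref{lem:cond-left} — which, for a finite family with variance proxy $1$, gives $\Expect[\sup_{i\in T}(y_i-\mu)]\leq\sqrt{2\log|T|}$. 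Adding back $\mu$ yields the first claimed inequality $\Expect[\sup_{i\in T}y_i]\leq \sqrt{2\log|T|}+\sqrt{2}\,\Gamma((m+1)/2)/\Gamma(m/2)$.

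For the second inequality it remains to check that $\mu=\Expect[\|g\|_2]\leq\sqrt{m}$, which is immediate from Jensen's inequality, $\Expect[\|g\|_2]\leq(\Expect[\|g\|_2^2])^{1/2}=\sqrt{m}$, since $\|g\|_2^2$ is a sum of $m$ independent squared standard Gaussians with total mean $m$. Combining the two bounds gives the displayed chain.

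The only delicate point is that the $y_i$ need not be independent, so one cannot go through order statistics or an explicit density for the maximum; the entire argument must run through the moment-generating-function/union-bound route underlying \citep[Lem.\,2.3]{Mass07}, which requires only the marginal subgaussian estimate. Getting the sharp constant $\sqrt{2\log|T|}$ hinges on the variance proxy being \emph{exactly} $1$, which is inherited from the unit Lipschitz constant of the norm; this is the step I would be most careful to state precisely.
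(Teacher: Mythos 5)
Your proposal is correct and follows essentially the same route as the paper: decompose $\sup_i y_i$ into the common mean plus the centered supremum, use the $1$-Lipschitz property of the Euclidean norm to get the subgaussian moment-generating-function bound $\log\Expect[e^{z(y_i-\Expect[y_i])}]\leq z^2/2$ (the paper derives this via log-Sobolev and the Herbst argument, which is what underlies the concentration result you cite), apply the soft-max maximal inequality to obtain $\sqrt{2\log|T|}$, and use Jensen for $\Expect[y_i]\leq\sqrt{m}$. The paper merely writes out explicitly the soft-max computation that \citep[Lem.\,2.3]{Mass07} encapsulates, and, like you, it emphasizes that only the marginal subgaussian estimate is needed so no independence of the $y_i$ is required.
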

We note that the proof we present applies non only to $\chi$ distributed random variables but more generally to Lipschitz functions of i.i.d normal random variables.
\begin{proof}
Since $y_i$'s have the same mean, we have
$$
\sup_{i\in T} y_i= \Expect[y_i]+\sup_{i\in T} (y_i-\Expect[y_i])\;.
$$
Here we know that $\Expect[y_i]=\frac{\sqrt{2}~\Gamma((m+1)/2)}{\Gamma(m/2)}$ and we also know using Jensen's inequality that $\Expect[y_i]\leq \sqrt{\Expect[y_i^2]}=\sqrt{m}$.

The fact that a standard multivariate normal satisfies a log-Sobolev inequality (with constant 1 in the setup of \citet[Chap.\,5]{Ledo05}) implies through the Herbst argument that any 1-Lipschitz function $F$ (with respect to Euclidian norm) of i.i.d Gaussian random variables satisfies (see \citet[Eq.5.8]{Ledo05})
$$
\log \Psi(z)\triangleq \log \Expect[\exp\{z(F(X)-\Expect[F(X)])\}]\leq \frac{z^2}{2}\;.
$$
The previous inequality naturally applies to $y_i$'s since a $\chi_m$ random variable is just the norm of a $m$-dimensional vector with i.i.d entries (and the norm is 1-Lipschitz by the triangle inequality).

Using a classic approach in probability, namely a ``soft-max'' inequality, the concavity of the $\log$, the definition of $\Psi(z)$ and the fact that the variables $y_i$ are identically distributed, we now have, if $\tilde{y}_i=y_i-\Expect[y_i]$,
\BEAS
\Expect[\sup_{i\in T} \tilde{y}_i] & \leq & \frac{1}{z} \Expect\left[\log\left(  \sum_{i\in T}  e^{z \tilde{y}_i}\right)\right]\\
& \leq & \frac{1}{z} \log\left(  \sum_{i\in T} \Expect\left[ e^{z \tilde{y}_i}\right]\right)\\
& \leq & \frac{\log|T|+\log \Psi(z)}{z} \leq \frac{\log|T|+z^2/2}{z}
\EEAS
for any $z>0$. Optimizing over $z$, we get that
$$
\Expect[\sup_{i\in T} \tilde{y}_i]\leq \sqrt{2\log |T|}\;.
$$
This gives the desired result.
\end{proof}

Let us now assume that the basis $F\in\reals^{n \times m}$ is a Gaussian random matrix (hence $A$ is implicitly defined here as a matrix annihilating $F$ on the left) with $F_{ij}\sim{\mathcal N}(0,1/m)$. As detailed below and throughout this appendix, standard concentration arguments allow us to directly show that $F$ satisfies condition~(\ref{eq:weak-cond}), without resorting to the restricted isometry property. We assume that $m$ scales proportionally to $n$, with $m=\mu n$ as $n$ goes to infinity. We also assume that $k$ scales as $\kappa m u_m$ with $u_m \rightarrow 0$ when $m$ and $n$ go to infinity.
\begin{proposition}\label{prop:gaussian-ok}
Suppose $m=\mu n$ and $k=\kappa m u_m$ for some $\mu,\kappa\in(0,1)$, with $u_m\rightarrow 0$ as $m\rightarrow \infty$. Let $F\in\reals^{n \times m}$ be an i.i.d. Gaussian random matrix with $F_{ij}\sim{\mathcal N}(0,1/m)$ and $\beta>0$, then $F$ satisfies condition (\ref{eq:weak-cond}) with high probability as $n$ goes to infinity.
\end{proposition}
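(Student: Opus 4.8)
The plan is to show that each side of condition~(\ref{eq:weak-cond}) concentrates around its mean, and that the inequality holds for the means when $n$ is large, in the scaling regime $m=\mu n$, $k=\kappa m u_m$ with $u_m\to 0$. The key feature of this regime is that $k=o(m)=o(n)$, so that the left-hand side of~(\ref{eq:weak-cond}) is small (of order $\sqrt{k\log(n/k)}\cdot\sqrt{k/m}$ in a typical sense) while the right-hand side is large (of order $n$), making the inequality comfortable for large $n$. Concretely, I would proceed in four steps.

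First, I would control the right-hand side $\sqrt{2/\pi}\sum_{i=1}^n\|F_i\|_2-\beta L(F)$ from below. Lemma~\ref{eq:chi-expect} gives $\sum_{i=1}^n\Expect[\|F_i\|_2]=n(1+O(m^{-1}))$, and Lemma~\ref{eq:cnct-euc} shows that $\sum_{i=1}^n\|F_i\|_2$ concentrates around this mean with fluctuations of order $\sqrt{n/m}=O(1)$; choosing $x=\epsilon n$ gives probability of deviation bounded by $e^{-m\epsilon^2 n/2}$, which is superpolynomially small. Thus $\sum_{i=1}^n\|F_i\|_2\geq n(1-o(1))$ w.h.p. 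For the subtracted term, I would bound $L(F)$ using $L(F)=\sigma_n(F)$ and standard Gaussian matrix norm estimates: since $F_{ij}\sim\mathcal N(0,1/m)$, we have $\|F\|_2=O(\sqrt{n/m})=O(1)$ w.h.p., and $L(F)\le L_\mathrm{mxct}(F)\le\sqrt{n}\|F\|_2=O(\sqrt n)$ by Lemma~\ref{lem:cond-right}. Hence the right-hand side is $\sqrt{2/\pi}\,n(1-o(1))-\beta\,O(\sqrt n)=\Theta(n)$ w.h.p.

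Second, I would control the left-hand side $(\sqrt{2k(1+\log(2n/k))}+\beta)\sigma_k(F)$ from above. By Lemma~\ref{eq:cnct-sigma}, $\sigma_k(F)$ concentrates around $\Expect[\sigma_k(F)]$ with Lipschitz scale $\sqrt{k/m}$, so deviations of order $\sqrt{k/m}$ fail only with superpolynomially small probability. It therefore suffices to bound $\Expect[\sigma_k(F)]$. Here I would combine Lemma~\ref{lem:cond-left}, which gives $\Expect[\|Fy\|_{k,1}]\le\sigma_k(F)\sqrt{2k(1+\log(2n/k))}$ for Gaussian $y$, with the Slepian/$\chi$-bound machinery: writing $\sigma_k(F)=\max_{u}\|(u_+-u_-)^TF\|_2$ over the $\binom{n}{k}2^k$ admissible sign patterns, each $\|(u_+-u_-)^TF\|_2$ is $\chi$-distributed (up to the scale $\|u_+-u_-\|_2/\sqrt m=\sqrt{k/m}$), so Lemma~\ref{lem:chi-mean} yields $\Expect[\sigma_k(F)]\le\sqrt{k/m}\bigl(\sqrt{2\log(2^k\binom nk)}+\sqrt m\bigr)=\sqrt{k/m}\,\sqrt m\,(1+o(1))=\sqrt k\,(1+o(1))$, using $\log(2^k\binom nk)=O(k\log(n/k))=o(m)$ in this regime. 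Thus $\sigma_k(F)\le\sqrt k\,(1+o(1))$ w.h.p., and the whole left-hand side is $O(\sqrt{k\log(n/k)}\cdot\sqrt k)=O(k\sqrt{\log(n/k)})$.

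Third, I would assemble the comparison. W.h.p. the inequality~(\ref{eq:weak-cond}) reduces to checking
\[
O\!\left(k\sqrt{\log(n/k)}\right)\;\le\;\sqrt{\tfrac{2}{\pi}}\,n\,(1-o(1))\,\alpha_k .
\]
Since $k=\kappa m u_m=\kappa\mu n\,u_m$ with $u_m\to0$, the left side is $O(n\,u_m\sqrt{\log(n/k)})$, which is $o(n)$ provided $u_m\sqrt{\log(n/k)}\to0$; the stated scaling (an extra logarithmic factor absorbed into $u_m$) is exactly what guarantees this. Hence for $\alpha_k$ bounded away from $0$ the inequality holds for all $n$ large enough, and a union bound over the finitely many superpolynomially-small failure events from the concentration lemmas shows the whole event holds with high probability. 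The main obstacle, and the step demanding the most care, is the second one: getting a clean upper bound on $\Expect[\sigma_k(F)]$ that is genuinely of order $\sqrt k$ rather than $\sqrt k\log n$, which requires the $\chi$-maximum bound of Lemma~\ref{lem:chi-mean} (so that the dominant contribution is $\sqrt{k/m}\cdot\sqrt m=\sqrt k$ and the entropy term $\sqrt{k/m}\cdot\sqrt{k\log(n/k)}$ is lower order precisely because $k=o(m)$). Everything else is a routine application of the Gaussian concentration lemmas already established in the appendix.
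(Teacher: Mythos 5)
Your proposal is correct and follows essentially the same route as the paper's own proof: concentration of both sides of~(\ref{eq:weak-cond}) via the appendix lemmas, the $\chi$-maximum bound of Lemma~\ref{lem:chi-mean} to get $\Expect[\sigma_k(F)]=O(\sqrt k)$ (the entropy term being lower order since $k\log(n/k)=o(m)$), the operator-norm bound $L(F)\le\sqrt n\|F\|_2=O(\sqrt n)$, and the final comparison of an $o(n)$ left-hand side against a $\Theta(n)$ right-hand side. The condition $u_m\sqrt{\log(n/k)}\to 0$ you flag at the end is automatic from the hypotheses, since $\log(n/k)\sim\log(1/u_m)$ and $u_m\sqrt{\log(1/u_m)}\to 0$ whenever $u_m\to 0$, which is exactly how the paper closes the argument.
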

\begin{proof}
We first study the left hand side of (\ref{eq:weak-cond}), which reads
\[
\left(\sqrt{2k\left(1+\log\frac{2n}{k}\right)}+\beta \right)\sigma_k(F) \leq  \left(\sqrt{\frac{2}{\pi}} \sum_{i=1}^n \|F_i\|_2 - \beta L(F)  \right) \alpha_k,
\]
when $n$ goes to infinity. Because $\sqrt{m/k}\left\|(u_+-u_-)^TF\right\|_2$ is $\chi$ distributed with $m$ d.f. whenever $u=(u_+,u_-)\in\{0,1\}^{2n}$ with $\ones^Tu=k$ and $u_+^Tu_-=0$, Lemma \ref{lem:chi-mean} shows that for $n$ large enough
\BEAS
\Expect[\sigma_k(F)]&=&\Expect\left[\max_{\{u=(u_+,u_-)\in\{0,1\}^{2n},\ones^Tu\leq k\}}\left\|(u_+-u_-)^TF\right\|_2\right]\\
&=&\Expect\left[\max_{\{u=(u_+,u_-)\in\{0,1\}^{2n},\ones^Tu= k,u_+^Tu_-=0\}}\left\|(u_+-u_-)^TF\right\|_2\right]\\
&\leq& \sqrt{\frac{k}{m}} \left[\sqrt{2k\left(1+\log\left(\frac{2n}{k}\right)\right)}+\sqrt{m}\right]\;.
\EEAS
Here we have used the fact that the cardinality of the set $T$ over which we are taking a supremum is such that
$\log|T|\leq k\left(1+\log\left(\frac{2n}{k}\right)\right)$, as shown in the proof of Lemma~\ref{lem:cond-left}.
We note that for a constant $c>0$, we have $k\log\left(\frac{2n}{k}\right)\leq c m u_m \log(1/u_m)\ll m$.
Therefore, if $c$ denotes a constant that may change from display to display (but does not depend on $n$ or $m$), we have
$$
\Expect[\sigma_k(F)]\leq c \sqrt{k}\;,
$$
and
\[
\left(\sqrt{2k\log\frac{2n}{k}} \right)\frac{\Expect[\sigma_k(F)]}{m} \leq c \frac{k}{m}\sqrt{- \log(u_m)}\leq c \sqrt{- u_m^2 \log(u_m)}\xrightarrow[n \rightarrow \infty]{}0
\]
where $c>0$ does not depend on $n$. For some arbitrarily small $\nu>0$, setting $x=n^\nu\sqrt{2k/m}$ in Lemma \ref{eq:cnct-sigma}, yields
\[
\Prob\left[ {\sigma_k(F)} \geq \Expect[\sigma_k(F)] + n^\nu \sqrt{{2k}/{m}} \right] \leq e^{-n^{2\nu}}.
\]
We now focus on the right hand side of (\ref{eq:weak-cond}). Lemma \ref{eq:chi-expect} shows that
\[
\lim_{n \rightarrow \infty} \frac{\sum_{i=1}^n\Expect\left[\|F_i\|_2\right]}{m}=\lim_{n \rightarrow \infty} \frac{n}{m}=\frac{1}{\mu}.
\]
because $\sqrt{m}\|F_i\|_2$ is $\chi$ distributed with $m$ degrees of freedom. Setting $x^2=n^{\nu+1}/m$ in Lemma \ref{eq:cnct-euc} then yields
\[
\Prob\left[\frac{\sum_{i=1}^n \|F_i\|_2}{m}  \leq \frac{\sum_{i=1}^n \Expect\left[\|F_i\|_2\right]}{m} - \frac{n^{\nu+1/2}\sqrt{2}}{m^{3/2}}  \right] \leq e^{-n^{2\nu}}
\]
which, together with the inequality on the left hand side derived above, means that for $n$ large enough, the matrix $F$ satisfies condition~(\ref{eq:weak-cond}) with probability at least $1-2e^{-n^{2\nu}}$. Finally, with $L(F)^2\leq n \|FF^T\|_2$, the fact that $\|F\|_2$ is 1-Lipschitz (with respect to Euclidian norm as a function of the (Gaussian) entries of $F$) combined with the bound on $\Expect[\|F\|_2]$ detailed in \citet[Prop.\,2.14]{Davi01} shows that
\[
\Prob\left[\|F\|_2  \leq c + \sqrt{2}n^\nu \right] \leq e^{-n^{2\nu}},
\]
for some absolute constant $c>0$. This means that $L(F)/m \rightarrow 0$ when $n$ goes to infinity and the second term of the right-hand side of (\ref{eq:weak-cond}) is then negligible compared to the first.
\end{proof}

This last result shows that the sufficient condition in (\ref{eq:weak-cond}) is weak enough on Gaussian matrices to hold w.h.p. near optimal values of the cardinality where the number of samples $m$ is almost proportional to the number of nonzero coefficients in the signal.

\small{\bibliographystyle{plainnat}
\bibsep 1ex
\bibliography{MainPerso}}
\end{document}